\documentclass[11pt]{article}

\usepackage{mathptmx}
\usepackage{preprint-layout}
\usepackage{preprint-notation}
\usepackage{a4wide}
\usepackage{float}
\usepackage{placeins}
\usepackage{url}

\graphicspath{{figures/}}

\title{A Barrier-Regularized Symmetric Nitsche Method for the Signorini Problem}

\author{
Peter Hansbo \qquad Mats G. Larson
}

\date{}

\begin{document}

\maketitle

\begin{abstract}
We introduce and analyze a barrier-regularized symmetric Nitsche method for
the scalar Signorini problem. Applying a logarithmic barrier to the
nonnegative slack variable in an augmented Lagrangian and then eliminating
that variable yields a smooth positive-part operator and the perturbed
complementarity relation on a primal-dual central path, with barrier parameter
\(\mu=\gamma s\). For every \(s>0\), the discrete problem is continuously
differentiable, uniquely solvable, and has a symmetric positive definite
Newton matrix when the Nitsche parameter is sufficiently large. Under a mild
barrier-feasibility condition, the continuous logarithmic energy has a unique
minimizer \(u_\mu\) for every \(\mu>0\), its gap is positive almost
everywhere, and
\(\|u-u_\mu\|_{H^1(\Omega)}\lesssim\mu^{1/2}\). Under the Sobolev regularity
used for finite element approximation, this minimizer satisfies the
\(L^2\) central-path boundary law. If the obstacle is locally the trace of
an \(H^2\)-function, we also prove a uniform local \(H^2\) bound in the
interior of each planar contact face, on neighborhoods that may contain a
free-boundary point of the limiting Signorini solution. The method is exactly
consistent and quasi-optimal relative to
\(u_\mu\), with constants independent of \(s\). If
\(u_\mu\) is uniformly bounded in
\(H^r(\Omega)\), \(3/2<r\leq k+1\), then
\(\|u-u_h\|_{H^1(\Omega)}\lesssim
h^{r-1}\|u_\mu\|_{H^r(\Omega)}+\mu^{1/2}\); hence
the sufficient balance \(s\lesssim h^{2r-1}\) preserves the available
energy-norm rate. We also derive rates for discrete penetration and the
complementarity residual. Numerical
experiments with \(P_1\) and \(P_2\) elements on structured and unstructured
meshes support the predicted rates and the local Newton theory, while showing
that a rate-preserving smoothing may still resolve the contact set
poorly.

\end{abstract}

\medskip
\noindent\textbf{Keywords:} Signorini problem, Nitsche's method, logarithmic
barrier, central path, interior point methods, smoothing, a priori error
estimates, Newton's method

\smallskip
\noindent\textbf{Mathematics Subject Classification (2020):} 65N30, 65N12,
49J40, 90C51


\section{Introduction}
\label{sec:introduction}

The Signorini problem is a canonical model of unilateral contact: the
displacement is constrained by an obstacle on part of the boundary, while the
contact region is not known in advance. Its complementarity conditions lead
to a variational inequality and, after finite element discretization, to a
nonsmooth nonlinear system. This scalar setting isolates the analytical and
algorithmic issues that also arise in elastic contact.

Nitsche's method enforces the contact conditions without introducing an
independent discrete multiplier. In the standard symmetric formulation, the
conditions are encoded by a positive-part operator. The resulting method is
consistent and accurate, but the positive part is not differentiable at the
origin and is therefore usually paired with semismooth Newton or primal-dual
active-set solvers. We study instead the smooth approximation
\begin{equation}
\varphi_{+,s}(w)=\frac{w}{2}+\sqrt{\frac{w^2}{4}+s},
\quad s>0 \label{eq:introduction-regularized-positive-part}
\end{equation}
which converges uniformly to \([w]_+\) as \(s\to0\). The central question is
not only how to solve the smoothed equations, but also how the smoothing
parameter should scale with the mesh size so that regularization does not
degrade the finite element error.

\paragraph{Contributions.}
The contribution has four parts.
\begin{enumerate}
\item \emph{Barrier derivation.} We show that
\eqref{eq:introduction-regularized-positive-part} is obtained exactly by
placing a logarithmic barrier on the nonnegative slack variable in the
augmented-Lagrangian formulation and then eliminating that variable. The
regularized contact law is consequently the perturbed complementarity
condition on a primal-dual central path, with
\begin{equation}
\mu=\gamma s \label{eq:introduction-barrier-smoothing-relation}
\end{equation}
where \(\mu\) is the barrier parameter and \(\gamma\) is the Nitsche
parameter.

\item \emph{Discrete solvability and Newton analysis.} For every \(s>0\), the
discrete residual is continuously differentiable. We derive its Newton
linearization and a Taylor remainder estimate, prove unique solvability, and
show that the Newton matrix is symmetric positive definite at every iterate
when the Nitsche parameter is sufficiently large. For fixed mesh and positive
\(s\), the local Newton rate is quadratic; the sufficient convergence radius
provided by the analysis deteriorates as \(s\to0\) and is not uniform in the
mesh size.

\item \emph{Continuous and discrete central-path analysis.} Under a mild
barrier-feasibility condition, we prove that the continuous logarithmic energy
has a unique minimizer \(u_\mu\) for every \(\mu>0\), with gap positive almost
everywhere, and that
\begin{equation}
\|u-u_\mu\|_{H^1(\Omega)}\lesssim\mu^{1/2}
\label{eq:introduction-central-path-estimate}
\end{equation}
This part of the analysis does not require reciprocal-gap or normal-flux
regularity. When the obstacle is locally the trace of an \(H^2\)-function, the
monotonicity of the inverse-gap boundary law further gives a uniform local
\(H^2\)-bound in the interior of each planar contact face, on neighborhoods
that may contain a free-boundary point of the limiting Signorini solution. We
then prove that the Sobolev regularity used for finite element approximation
automatically yields the
\(L^2(\Gamma_C)\) central-path boundary law. Under a uniform Sobolev bound,
the method is exactly consistent and quasi-optimal relative to \(u_\mu\), with
constants independent of \(s\). If
\(3/2<r\leq k+1\) and \(u_\mu\) is uniformly bounded in \(H^r(\Omega)\), then
\begin{equation}
\|u-u_h\|_{H^1(\Omega)}
\lesssim h^{r-1}\|u_\mu\|_{H^r(\Omega)}+\mu^{1/2} \label{eq:introduction-main-error-estimate}
\end{equation}
and the sufficient rate-preserving balance is \(s\lesssim h^{2r-1}\). Under
full uniform regularity, \(r=k+1\), the sufficient choice
\(s\lesssim h^{2k+1}\) preserves the rate \(h^k\). A direct comparison with
the Signorini solution produces a consistency term and a condition stronger by
one power of \(h\). We also derive \(h^{r-1/2}\) bounds for the discrete
penetration and complementarity residual.

\item \emph{Numerical assessment.} Experiments with \(P_1\) and \(P_2\)
elements examine regularization error, Newton convergence, smooth and singular
exact solutions, contact pressure, the artificial gap, the Nitsche parameter,
and unstructured meshes. They support the predicted energy-norm rates and the
local Newton theory, and they show that a value of \(s\) adequate for the
energy error can still be too large for accurate \(L^2(\Omega)\) error or
contact-set resolution.
\end{enumerate}

\paragraph{Previous Work.}
The mathematical theory of unilateral contact as a variational inequality and
its finite element approximation originates in the classical works
\cite{Fa74,KiOd88}; see also the review \cite{Wo11} and the monograph
\cite{ChHiRe23}. Refined estimates for conforming \(P_1\) and \(P_2\)
elements were established in \cite{HiRe12,BeBe03}. Optimal energy-norm
estimates in two and three dimensions, without additional assumptions on the
unknown contact set, were obtained in \cite{DrHi15}. The
\(3/2\)-homogeneous profile used in our numerical tests reflects the optimal
\(C^{1,1/2}\) regularity at the Signorini boundary \cite{AtCa06}. Uniform
regularity estimates obtained through monotone penalized boundary-obstacle
approximations appear in \cite{MeNi05,Ja17}. Related peer-reviewed regularity
and free-boundary results for a two-penalty boundary problem are given in
\cite{DaJa21}; its boundary nonlinearity differs from the logarithmic law
studied here. In polygonal domains, the global Sobolev exponent is also
limited by angle-dependent singularities at transitions between Signorini and
Dirichlet or Neumann conditions \cite{ApNi20}.

Nitsche's method \cite{Ni71} was introduced for unilateral contact by Chouly
and Hild \cite{ChHi13}, who represented the Kuhn--Tucker conditions by a
single equation involving the positive part and proved optimal error
estimates. Symmetric and nonsymmetric variants were analyzed in
\cite{ChHiRe15}; we use the symmetric formulation. Related developments
include a penalty-free nonconforming method \cite{BuHaLa17} and a stabilized
mixed interpretation with a posteriori estimates \cite{GuStVi20}. The
augmented-Lagrangian viewpoint adopted here follows \cite{Ro74} and its
development for Nitsche-type contact methods in \cite{BuHaLa19,BuHaLa23}.
Di Pietro, Fontana, and Kazymyrenko \cite{DiFoKa22} regularized the contact
projection in a Nitsche formulation as part of an adaptive nonlinear solution
procedure and an a posteriori analysis. Their regularization separates
discretization, linearization, and algebraic errors and is tuned toward the
original nonsmooth discrete problem; it is not derived from the boundary
barrier or analyzed through the continuous central path considered here.

At the algebraic level, contact complementarity systems are commonly solved
by generalized or semismooth Newton methods and equivalent primal-dual
active-set strategies \cite{AlCu91,HiItKu02}. The operator in
\eqref{eq:introduction-regularized-positive-part} belongs to the
Chen--Harker--Kanzow--Smale family \cite{ChHa93,Sm87,Ka96}; smoothing methods
for nonsmooth optimization are surveyed in \cite{Ch12}. Kanno and Ohsaki
\cite{KaOh11} applied an implicit smoothing formulation to frictionless
contact in large deformations. Ben Gharbia et al.\ \cite{BeFeVoYo23} developed
an adaptive inexact smoothing Newton method for a finite-volume discretization
of a variational inequality, controlling smoothing, linearization, and
algebraic errors a posteriori.

Regularization and path following have also been studied in connection with
finite element discretization. Bank, Gill, and Marcia \cite{BaGiMa03} coupled
primal-dual interior methods to adaptive finite element approximations of
elliptic variational inequalities and addressed scaling across refinement.
Stadler \cite{St07} combined semismooth Newton iteration with
augmented-Lagrangian and path-following updates for elastic contact. That work
solves a sequence of regularized problems, whereas the present method fixes
\(s\) as a function of the mesh size and analyzes the resulting discretization
against a continuous central path. For domain obstacle problems, the proximal
Galerkin method \cite{KeSu26} uses entropy regularization and a latent-variable
formulation to preserve pointwise bounds.

Continuous logarithmic barriers and central paths for contact predate the
present work. Kloosterman et al.\ \cite{KlVaVaHu01} combined a barrier
algorithm with an augmented formulation for finite element frictionless
contact. Tanoh, Renard, and Noll \cite{TaReNo04} wrote the continuous
logarithmic boundary energy, its multiplier and slack-variable forms, and the
perturbed Kuhn--Tucker product before developing discrete primal and
primal--dual central paths. Temizer, Abdalla, and G\"urdal \cite{TeAbGu14}
developed primal and primal--dual interior-point methods with slack variables
for isogeometric mortar contact. Zhao et al.\ \cite{ZhChJi22} used a truncated
smooth barrier energy that deliberately retains an artificial gap, while Guo
et al.\ \cite{GuLiYaLiWa24} combined a barrier with an augmented Lagrangian for
elastodynamic contact. These works establish that logarithmic barriers,
slack-variable contact formulations, and continuous and discrete central paths
are not new ingredients here.

The contribution is instead their specific integration with symmetric Nitsche
contact. Exact elimination of the barrier-regularized slack variable produces
the smoothed positive-part operator used in the Nitsche boundary form. This
identity gives exact consistency against the continuous path, an
\(s\)-uniform quasi-optimal finite element estimate, and a
regularity-dependent sufficient balance between \(s\) and \(h\). Neither the
symmetric-Nitsche elimination nor this uniform discretization analysis appears
in the barrier-contact work above. General barrier and path-following theory is
described in \cite{FiMc68,Wr97,FoGiWr02}.

\paragraph{Outline.}
Sections~\ref{sec:model-problem}--\ref{sec:finite-element-method} present the
formulation, Sections~\ref{sec:newton-method}--\ref{sec:error-estimates} the
analysis, Section~\ref{sec:numerical-examples} the computations, and
Section~\ref{sec:concluding-remarks} the conclusions.


\section{The Model Problem}
\label{sec:model-problem}

Let \(\Omega\subset\mathbb{R}^d\), \(d\in\{2,3\}\), be a bounded polygonal
or polyhedral Lipschitz domain. We assume that its boundary is decomposed into
relatively open, disjoint parts \(\Gamma_D\) and \(\Gamma_C\) such that
\begin{align}
\lvert\Gamma_D\rvert&>0
\label{eq:positive-dirichlet-boundary-measure}\\
\overline{\Gamma}_D\cup\overline{\Gamma}_C=\partial\Omega
\label{eq:boundary-decomposition}
\end{align}
The extension to an additional Neumann part is immediate. We impose
homogeneous Dirichlet conditions on \(\Gamma_D\); nonhomogeneous data may be
handled by a standard lifting. Set
\begin{align}
V &\coloneqq \{v\in H^1(\Omega):v=0\ \text{on}\ \Gamma_D\}
\label{eq:energy-space}\\
a(v,w) &\coloneqq (\nabla v,\nabla w)_\Omega
\label{eq:bilinear-form}\\
l(w) &\coloneqq (f,w)_\Omega
\label{eq:load-functional}
\end{align}
where \(f\in L^2(\Omega)\). The assumption on \(\Gamma_D\) gives the
Poincar\'e inequality
\begin{equation}
c_P\|w\|_{H^1(\Omega)}^2\leq a(w,w)
\quad \forall w\in V
\label{eq:poincare-inequality}
\end{equation}

Let \(g\in H^{1/2}(\Gamma_C)\) denote the obstacle and suppose that the
closed convex set
\begin{equation}
K\coloneqq \{v\in V:v\leq g\ \text{a.e. on}\ \Gamma_C\}
\label{eq:admissible-set}
\end{equation}
is nonempty. The scalar Signorini problem is
\begin{equation}
u\in\operatorname*{argmin}_{v\in K}
\left(\frac12 a(v,v)-l(v)\right)
\label{eq:signorini-minimization}
\end{equation}
Equivalently, \(u\in K\) is the unique solution of the variational inequality
\begin{equation}
a(u,v-u)-l(v-u)\geq0
\quad \forall v\in K
\label{eq:signorini-variational-inequality}
\end{equation}


\paragraph{Strong Contact Notation.}
\label{par:strong-contact-notation}
In the natural weak formulation, the contact reaction is an element of the
dual trace space and need not be an \(L^2(\Gamma_C)\) function. Whenever the
solution is sufficiently regular that the normal flux
\(p=\sigma_n(u)=\nabla u\cdot\mathbf{n}\) admits a pointwise representative,
the contact conditions read
\begin{align}
\rho(u)\coloneqq u-g &\leq0
\quad \text{on }\Gamma_C
\label{eq:nonpenetration-condition}\\
p &\leq0
\quad \text{on }\Gamma_C
\label{eq:contact-pressure-sign}\\
\rho(u)p &=0
\quad \text{on }\Gamma_C
\label{eq:contact-complementarity}
\end{align}
Thus \(-p\) is the nonnegative contact pressure. For any \(\gamma>0\), these
conditions are equivalent pointwise to
\begin{equation}
p=-\gamma\,[\rho(u)-p/\gamma]_+
\quad \text{on }\Gamma_C
\label{eq:contact-positive-part-reformulation}
\end{equation}
where \([w]_+=\max(0,w)\). This strong reformulation motivates the Nitsche
method below. All uses of pointwise boundary products in the continuous
central-path analysis will be accompanied by explicit regularity and
integrability assumptions.


\section{The Finite Element Method}
\label{sec:finite-element-method}

Let \(\mcT_h\) be a shape-regular, quasiuniform, boundary-fitted partition of
\(\Omega\) with mesh parameter \(h\in(0,h_0]\). Let
\(V_h\subset V\cap C^0(\overline\Omega)\) be the conforming finite element
space of continuous piecewise polynomials of degree \(k\). On each contact
face, \(\sigma_n(v_h)\) denotes the normal derivative taken from its unique
adjacent element. We take
\(\pi_h:H^1(\Omega)\rightarrow V_h\) to be a boundary-preserving
Scott--Zhang quasi-interpolant \cite{ScZh90}. Its fractional-order local
stability and approximation properties follow by real interpolation; see
\cite{Ci13}. In particular,
\begin{equation}
\| v - \pi_h v \|_{H^m(T)} \lesssim h^{r - m} \| v \|_{H^{r}(N_h(T))} 
\label{eq:local-interpolation-estimate}
\end{equation}
for \(0\leq m\leq r\leq k+1\), where \(N_h(T)\) is the union of all elements
that share a node with \(T\).


\subsection{Augmented-Lagrangian Formulation}
\label{subsec:augmented-lagrangian-formulation}

The following augmented-Lagrangian calculation is an algebraic derivation of
the discrete boundary terms. All quantities are taken sufficiently regular
that the displayed \(L^2(\Gamma_C)\) products and pointwise eliminations are
meaningful. We introduce a slack variable \(z\in L^2_+(\Gamma_C)\), where
\(L^2_+(\Gamma_C)=\{q\in L^2(\Gamma_C):q\geq0\ \text{a.e.}\}\), and adopt
the normal-flux sign convention for the multiplier: at the solution,
\(\lambda=\sigma_n(u)\leq0\), while \(-\lambda\) is the nonnegative contact
pressure. The augmented Lagrangian is
\begin{equation}
\begin{aligned}
\mcL_A(v,\lambda,z) &= 
\frac12 \| \nabla v \|^2_\Omega - (f,v)_\Omega
\\
&\qquad 
- (((v-g) + z),\lambda)_{\Gamma_C} + \frac{\gamma}{2} \| (v - g) + z\|^2_{\Gamma_C}
\\
&= 
\frac12 \| \nabla v \|^2_\Omega - (f,v)_\Omega
\\
&\qquad 
+ \frac{\gamma}{2} \Big(  \| (v - g) + z - \lambda/\gamma\|^2_{\Gamma_C} - \|\lambda/\gamma\|^2_{\Gamma_C} \Big)
\end{aligned}
\end{equation}
where the second equality follows by completing the square. Taking the
constraint $z\geq 0$ into account, the minimum with respect to $z$
is attained in 
\begin{equation}
z = [- (v - g) + \lambda/\gamma]_+
\end{equation}
and using the identity $a+[-a]_+ = [a]_+$ we get 
\begin{align}
\mcL_A(v,\lambda) 
&= 
\frac12 \| \nabla v \|^2_\Omega - (f,v)_\Omega
+ \frac{\gamma}{2} \Big(  \| [(v - g) - \lambda/\gamma]_+\|^2_{\Gamma_C} - \|\lambda/\gamma\|^2_{\Gamma_C} \Big)
\end{align}
Next we eliminate the Lagrange multiplier using the substitution
$\lambda = \sigma_n(v)$, which gives the augmented Lagrangian
\begin{align}
\mcL_A(v) 
&= 
\frac12 \| \nabla v \|^2_\Omega - (f,v)_\Omega
+ \frac{\gamma}{2} \Big(  \| [(v - g) - \sigma_n(v)/\gamma]_+\|^2_{\Gamma_C} - \|\sigma_n(v)/\gamma\|^2_{\Gamma_C} \Big)
\end{align}
For \(\gamma=\gamma_0/h\), introduce the affine contact operator and its
differential,
\begin{align}
P(v)&\coloneqq (v-g)-\sigma_n(v)/\gamma
\label{eq:contact-operator}\\
DP(w)&\coloneqq w-\sigma_n(w)/\gamma
\label{eq:contact-operator-differential}
\end{align}
so that \(P(v+w)=P(v)+DP(w)\). The corresponding equilibrium equations take
the form
\begin{align}
u_h \in V_h:\qquad A_h(u_h,v) = l(v) \qquad \forall v\in V_h
\end{align}
where 
\begin{align}
A_h(v,w) &= a_h(v,w) + b_h(v,w)
\\
a_h(v,w) &= a(v,w) - \frac{1}{\gamma}(\sigma_n(v),\sigma_n(w))_{\Gamma_C}
\\
b_h(v,w) &= \gamma([P(v)]_+,DP(w))_{\Gamma_C}
\\
l(v) &= (f,v)_\Omega
\end{align}
Here \(a(v,w)=(\nabla v,\nabla w)_\Omega\).


\subsection{Barrier Regularization and Elimination}
\label{subsec:barrier-regularization-elimination}

We now return to the elimination of the slack variable. The variable \(z\)
represents the gap \(g-v\), with equality at a saddle point where the slack
constraint is satisfied. The nondifferentiability originates in the
constraint \(z\geq0\) becoming active on the contact zone. At the algebraic
level, we relax this constraint by a logarithmic barrier with parameter
\(\mu>0\),
\begin{align}
\mcL^\mu_A(v,\lambda,z) = \mcL_A(v,\lambda,z) - \mu (1,\ln z)_{\Gamma_C}
\end{align}
and carry out the elimination pointwise for \(z>0\). Since the scalar barrier
is smooth on \((0,\infty)\), its stationary point is characterized by
\begin{align}
D_z \mcL^\mu_A(v,\lambda,z) = -\frac{\mu}{z} - \lambda + \gamma\big((v-g) + z\big) = 0
\end{align}
or equivalently, after multiplication by $z/\gamma$ and with the notation
$w = (v-g) - \lambda/\gamma$,
\begin{align}
z^2 + w z - \mu/\gamma = 0
\end{align}
The unique positive root of this equation is
\begin{align}
z = -\frac{w}{2} + \sqrt{\frac{w^2}{4} + \frac{\mu}{\gamma}} = \varphi_{+,s}(-w),
\qquad s = \mu/\gamma
\end{align}
where we introduced the regularized max operator
\begin{align}
\boxed{\varphi_{+,s}(w) = w/2 + \sqrt{w^2/4 + s}}
\end{align}
with regularization parameter $s$. For $s=0$ we have precisely
\begin{align}
\varphi_{+,0}(w) = \max(0,w)
\end{align}
and thus the barrier-free elimination \(z=[-w]_+\) is recovered as
\(\mu\rightarrow0\). The regularized max operator is therefore precisely the
slack variable produced by the logarithmic barrier. The identity
\(a+[-a]_+=[a]_+\) carries over in the form
\begin{align}
w + \varphi_{+,s}(-w) = \varphi_{+,s}(w)
\end{align}
which follows immediately from the definition.

To see how the barrier enters the reduced equations, let \(z=z(v,\lambda)\)
denote the stationary root. In the chain rule for the reduced functional, the
term containing the derivative of \(z\) is multiplied by
\(D_z\mcL_A^\mu(v,\lambda,z)=0\). Moreover, the barrier has no explicit
dependence on \(v\). Its effect therefore enters entirely through the
stationary value of \(z\), and the reduced equilibrium equations have the same
form as before with \([\cdot]_+\) replaced by \(\varphi_{+,s}\).

We define the regularized contact form by
\begin{align}
b_{h,s}(v,w) &= \gamma(\varphi_{+,s}(P(v)),DP(w))_{\Gamma_C}
\end{align}
The full regularized form is
\begin{align}
A_{h,s}(v,w) &= a_h(v,w) + b_{h,s}(v,w)
\end{align}
and the discrete method reads
\begin{align}
u_h \in V_h:\qquad A_{h,s}(u_h,v) = l(v) \qquad \forall v\in V_h
\end{align}

\begin{rem}[Relation to interior point methods]
\label{rem:interior-point-relation}
The regularized contact condition
\begin{align}
p = -\gamma\varphi_{+,s}(\rho(v) - p/\gamma)
\end{align}
which replaces the strong contact reformulation
\eqref{eq:contact-positive-part-reformulation}, is
equivalent to
\begin{align}
\rho(v) < 0, \qquad p < 0, \qquad \rho(v)\, p = \gamma s = \mu
\end{align}
as follows by setting $a = -p/\gamma$ and $b = -\rho(v)$ and using
$\varphi_{+,s}(w)\varphi_{+,s}(-w) = s$. The complementarity condition
$\rho(v) p = 0$ is thus perturbed into $\rho(v) p = \mu$, which is precisely the
central path of a primal-dual interior point method; this equivalence is the
basis of the non-interior continuation methods of \cite{ChHa93,Ka96}.
For sufficiently regular functions satisfying this relation pointwise, the
gap \(g-v\) and contact pressure \(-p\) are therefore strictly positive for
\(s>0\). The relation is an interior-point relaxation rather than a penalty.
Section~\ref{sec:error-estimates} states the precise assumptions under which a
continuous central-path solution with these properties is used in the error
analysis and quantifies how fast \(\mu=\gamma s\) must tend to zero under mesh
refinement.
\end{rem}


\section{Newton's Method}
\label{sec:newton-method}

We first derive the Newton linearization and its Taylor remainder. We then
establish coercivity and unique solvability before collecting the consequences
for the linear algebra and local convergence of the iteration.


\subsection{Linearization and Remainder}
\label{subsec:newton-linearization-remainder}

Since $\varphi_{+,s} \in C^\infty(\IR)$ for $s > 0$, the form $A_{h,s}$ is
continuously differentiable in its first argument, and we have the expansion
\begin{align}
A_{h,s}(v+\delta, w) = A_{h,s}(v, w) + L_h(v;\delta,w) + R_h(v;\delta,w)
\label{eq:newton-expansion}
\end{align}
where the linearization of $A_{h,s}$ at $v$ takes the form
\begin{align}
L_h(v;\delta,w)
= a_h(\delta,w)
+ \gamma( \varphi'_{+,s}(P(v))\, DP(\delta), DP(w))_{\Gamma_C}
\label{eq:newton-linearization}
\end{align}
Here \(P\) and \(DP\) are the affine contact operator and its differential
defined in \eqref{eq:contact-operator}--\eqref{eq:contact-operator-differential}.
In particular, the argument of \(DP\) is a direction rather than a base point.
The remainder is
\begin{align}
R_h(v;\delta,w) &= \gamma \Big(\int_0^1 (1-t)\, \varphi''_{+,s} ( P(v + t \delta) )\, dt\, (DP(\delta))^2,DP(w)\Big)_{\Gamma_C}
\label{eq:newton-remainder}
\end{align}
Newton's method then reads: given $u_{h,0}\in V_h$, determine
$u_{h,n} = u_{h,n-1} + \delta_n$, where $\delta_n\in V_h$ is the
solution to
\begin{align}
L_h(u_{h,n-1}; \delta_n,v) = l(v) - A_{h,s}(u_{h,n-1},v) \qquad \forall v \in V_h
\label{eq:newton-step}
\end{align}

To derive the form of the remainder, we use the fundamental theorem of
calculus, integration by parts with $t-1$ as antiderivative of $1$, and the
affinity of $P$, so that $P(v+t\delta) = P(v) + t\, DP(\delta)$, to obtain the
Taylor formula
\begin{align}
\varphi_{+,s}(P(v+\delta))
&=\varphi_{+,s}(P(v)) + \varphi'_{+,s}(P(v))\,DP(\delta)
\nonumber \\
&\qquad
- \int_0^1 (t-1) \varphi''_{+,s}(P(v + t \delta))(DP(\delta))^2\, dt
\label{eq:regularized-positive-part-taylor-formula}
\end{align}

For the analysis of the remainder we record the elementary identities
\begin{align}
\varphi'_{+,s}(w) = 1/2 + \frac{w/4}{\sqrt{w^2/4 + s}}
\label{eq:regularized-positive-part-first-derivative}
\end{align}
\begin{align}
\varphi''_{+,s}(w) =\frac{1/4}{\sqrt{w^2/4 + s}}\frac{s}{w^2/4 + s}
\label{eq:regularized-positive-part-second-derivative}
\end{align}
Thus \(\varphi'_{+,s}\) is strictly increasing and satisfies
\begin{equation}
\lim_{w \rightarrow -\infty} \varphi'_{+,s}(w) = 0,
\qquad
\lim_{w \rightarrow \infty} \varphi'_{+,s}(w) = 1
\label{eq:regularized-positive-part-derivative-limits}
\end{equation}
Furthermore,
\begin{align}
\varphi_{+,s}(w) - \varphi_{+,0}(w) &= \frac{s}{\sqrt{w^2/4 + s} + \sqrt{w^2/4} } \leq \sqrt{s}
\label{eq:regularized-positive-part-uniform-error}
\end{align}

\paragraph{Stable Evaluation.}
For floating-point evaluation, the defining expression loses relative
accuracy when \(w<0\) and \(s\ll w^2\). A cancellation-free representation is
obtained by setting
\begin{equation}
q_s(w)=\operatorname{hypot}(|w|,2\sqrt{s})=\sqrt{w^2+4s}
\label{eq:stable-smoothing-root}
\end{equation}
and evaluating
\begin{equation}
\varphi_{+,s}(w)=
\begin{cases}
(w+q_s(w))/2, & w\geq0\\
2s/(q_s(w)-w), & w<0
\end{cases}
\label{eq:stable-regularized-positive-part}
\end{equation}
The derivatives can then be evaluated without subtractive cancellation as
\begin{align}
\varphi'_{+,s}(w)&=\frac{\varphi_{+,s}(w)}{q_s(w)}
\label{eq:stable-regularized-positive-part-first-derivative}\\
\varphi''_{+,s}(w)&=\frac{2s}{q_s(w)^3}
\label{eq:stable-regularized-positive-part-second-derivative}
\end{align}

\begin{lem}[Remainder estimate]
\label{lem:newton-remainder-estimate}
It holds
\begin{align}
\lvert R_h(v;\delta, w)\rvert
\lesssim \gamma s^{-1/2} \|DP(\delta)\|^2_{L^4(\Gamma_C)} \|DP(w)\|_{\Gamma_C}
\label{eq:newton-remainder-estimate}
\end{align}
\end{lem}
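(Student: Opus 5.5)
The estimate follows from a uniform pointwise bound on \(\varphi''_{+,s}\) combined with Hölder's inequality on \(\Gamma_C\). The remainder has the explicit form \eqref{eq:newton-remainder}, so no structural information about \(v\) or \(\delta\) is needed. Only the size of the kernel \(\int_0^1(1-t)\varphi''_{+,s}(P(v+t\delta))\,dt\) has to be controlled. From the identity \eqref{eq:regularized-positive-part-second-derivative}, or equivalently the stable form \eqref{eq:stable-regularized-positive-part-second-derivative}, we have
\begin{equation*}
0<\varphi''_{+,s}(w)=\frac{2s}{(w^2+4s)^{3/2}}\leq \frac{2s}{(4s)^{3/2}}=\frac{1}{4}\,s^{-1/2}
\qquad\forall w\in\mathbb{R}
\end{equation*}
with the maximum attained at \(w=0\). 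Since \(0\leq 1-t\leq 1\), the kernel is a nonnegative function on \(\Gamma_C\) bounded pointwise by \(\tfrac18 s^{-1/2}\), independently of \(v\), \(\delta\) and \(t\).

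Next, insert this bound into the boundary integral defining \(R_h\) in \eqref{eq:newton-remainder}:
\begin{equation*}
\lvert R_h(v;\delta,w)\rvert\leq \frac{\gamma}{8}\,s^{-1/2}\int_{\Gamma_C}(DP(\delta))^2\,\lvert DP(w)\rvert\,ds
\end{equation*}
Then apply the Cauchy--Schwarz inequality in \(L^2(\Gamma_C)\) to the product of \((DP(\delta))^2\) and \(DP(w)\). This gives \(\|(DP(\delta))^2\|_{\Gamma_C}\|DP(w)\|_{\Gamma_C}=\|DP(\delta)\|^2_{L^4(\Gamma_C)}\|DP(w)\|_{\Gamma_C}\), which is \eqref{eq:newton-remainder-estimate} with hidden constant \(1/8\).

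The only point requiring care is measurability and finiteness of the quantities involved. For discrete arguments \(v,\delta,w\in V_h\), the traces \(P(v+t\delta)\) and \(DP(\cdot)\) are piecewise polynomials on \(\Gamma_C\), so they lie in every \(L^p\). The \(t\)-integral may then be exchanged with the boundary integral by Fubini, since the integrand is bounded. The main step is therefore just the elementary uniform bound on \(\varphi''_{+,s}\), and it is also the source of the \(s^{-1/2}\) deterioration noted in the introduction.
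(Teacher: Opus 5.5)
Your proof is correct and follows the paper's argument: the uniform bound $\sup_{y}\varphi''_{+,s}(y)=\tfrac14 s^{-1/2}$, attained at $y=0$, controls the kernel in $L^\infty(\Gamma_C)$, and Hölder/Cauchy--Schwarz with $\|(DP(\delta))^2\|_{\Gamma_C}=\|DP(\delta)\|^2_{L^4(\Gamma_C)}$ finishes the estimate. One small point of justification: your sharper constant $\tfrac18$ comes from $\int_0^1(1-t)\,dt=\tfrac12$, whereas $0\leq 1-t\leq 1$ alone only gives the paper's $\tfrac14$.
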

\begin{proof} We have the estimates
\begin{align}
\lvert R_h(v;\delta,w)\rvert
&= \gamma \Big\lvert\Big(\int_0^1  (1-t) \varphi''_{+,s} ( P(v + t \delta) )\, dt\, (DP(\delta))^2,DP(w)\Big)_{\Gamma_C}\Big\rvert
\nonumber \\
&\leq \gamma \Big\|\int_0^1  (1-t) \varphi''_{+,s} ( P(v + t \delta) )\, dt\Big\|_{L^\infty(\Gamma_C)} \|(DP(\delta))^2\|_{L^2(\Gamma_C)}
\|DP(w)\|_{L^2(\Gamma_C)}
\nonumber \\
&\lesssim 
\gamma s^{-1/2} \|DP(\delta)\|^2_{L^4(\Gamma_C)}\|DP(w)\|_{L^2(\Gamma_C)}
\end{align}
Here we used the estimate, at each $x \in \Gamma_C$,
\begin{align}
0 \leq \int_0^1 (1-t) \varphi''_{+,s} ( P(v + t \delta) )\, dt 
&\leq 
\sup_{y \in \IR} \varphi''_{+,s}(y) = \frac{1}{4\sqrt{s}}
\end{align}
which follows since $\varphi''_{+,s}(y)$ attains its maximum at $y=0$.
\end{proof}


\subsection{Coercivity and Solvability}
\label{subsec:newton-coercivity-solvability}

We next prove that the linearization is coercive uniformly in \(s\). This
property yields both unique solvability of the discrete problem and a positive
definite Newton matrix. We use the standard elementwise trace inverse
inequality, valid for \(w\in V_h\),
\begin{align}
\|\sigma_n(w)\|^2_{\Gamma_C} \leq C_I h^{-1} \|\nabla w\|^2_{\Omega}
\end{align}
or equivalently
\begin{align}\label{eq:inverse}
\gamma^{-1} \|\sigma_n(w)\|^2_{\Gamma_C}
\leq \frac{C_I}{\gamma_0} \|\nabla w\|^2_{\Omega}
\end{align}
the second form following from $\gamma = \gamma_0/h$.

\begin{lem}[Coercivity of the linearization]\label{lem:coercive}
It holds
\begin{align}\label{eq:coercive}
L_h(v;\delta,\delta) \geq C \|\nabla \delta\|^2_{\Omega}
\qquad \forall\, v,\delta \in V_h
\end{align}
with the constant
\begin{align}\label{eq:cconst}
C = 1 - \frac{C_I}{\gamma_0}
\end{align}
independent of $v$, of $s$ and of $h$. In particular $C > 0$, and the
linearized form is coercive, provided $\gamma_0 > C_I$.
\end{lem}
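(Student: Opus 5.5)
We take \(w=\delta\) in the linearization \eqref{eq:newton-linearization} and split \(L_h(v;\delta,\delta)\) into its two parts:
\begin{align*}
L_h(v;\delta,\delta)
= \|\nabla\delta\|^2_\Omega - \gamma^{-1}\|\sigma_n(\delta)\|^2_{\Gamma_C}
+ \gamma\big(\varphi'_{+,s}(P(v))\,DP(\delta),DP(\delta)\big)_{\Gamma_C}.
\end{align*}
The contact term is handled by a sign argument only. From \eqref{eq:regularized-positive-part-first-derivative} and the monotonicity and limits \eqref{eq:regularized-positive-part-derivative-limits}, we have \(0<\varphi'_{+,s}(y)<1\) for every \(y\in\mathbb{R}\) and every \(s>0\). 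Equivalently, by the stable form \eqref{eq:stable-regularized-positive-part-first-derivative}, \(\varphi'_{+,s}=\varphi_{+,s}/q_s\) with \(\varphi_{+,s}>0\). Hence the weighted boundary term is pointwise nonnegative, \(\gamma\varphi'_{+,s}(P(v))(DP(\delta))^2\geq0\) a.e.\ on \(\Gamma_C\), and we simply drop it.

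It then remains to bound \(a_h(\delta,\delta)\) from below. Since \(\delta\in V_h\), the inverse inequality \eqref{eq:inverse} gives \(\gamma^{-1}\|\sigma_n(\delta)\|^2_{\Gamma_C}\leq (C_I/\gamma_0)\|\nabla\delta\|^2_\Omega\). Therefore
\[
L_h(v;\delta,\delta)\geq a_h(\delta,\delta)\geq \Bigl(1-\frac{C_I}{\gamma_0}\Bigr)\|\nabla\delta\|^2_\Omega,
\]
which is \eqref{eq:coercive} with the constant \eqref{eq:cconst}.

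The constant \(C\) is independent of \(v\) and \(s\) because the only place these enter is the nonnegative term that was discarded. It is independent of \(h\) because \(C_I\) is a shape-regularity constant and \(\gamma=\gamma_0/h\) exactly absorbs the \(h^{-1}\) in the trace inverse estimate. Positivity of \(C\) when \(\gamma_0>C_I\) is immediate.

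There is no real obstacle here. The one point to state carefully is that the lower bound \(\varphi'_{+,s}\geq0\) holds uniformly in \(s\) and in the argument \(P(v)\). The cruder bound \(\varphi'\leq1\) is not what is needed, and nothing beyond nonnegativity is used.
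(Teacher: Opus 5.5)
Your proof is correct and is essentially the paper's own argument: you drop the boundary term using only $\varphi'_{+,s}\geq0$, and you bound $a_h(\delta,\delta)$ from below by $(1-C_I/\gamma_0)\|\nabla\delta\|^2_\Omega$ via the inverse inequality \eqref{eq:inverse}. As in the paper, neither step involves $v$ or $s$.
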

\begin{proof}
We have $L_h(v;\delta,\delta) = a_h(\delta,\delta)
+ \gamma( \varphi'_{+,s}(P(v))\, DP(\delta), DP(\delta))_{\Gamma_C}$.
The second term is nonnegative because \(\varphi'_{+,s}\geq0\) and the
remaining factor is a square. For the first term the inverse inequality
\eqref{eq:inverse} gives
\begin{align}
a_h(\delta,\delta) = \|\nabla \delta\|^2_{\Omega}
- \gamma^{-1}\|\sigma_n(\delta)\|^2_{\Gamma_C}
\geq \Big(1 - \frac{C_I}{\gamma_0}\Big) \|\nabla \delta\|^2_{\Omega}
= C \|\nabla \delta\|^2_{\Omega}
\end{align}
Neither bound involves $v$ or $s$.
\end{proof}

Only the nonnegativity of \(\varphi'_{+,s}\) is used for coercivity; strict
positivity is not required because \(a_h\) is already coercive when
\(\gamma_0>C_I\). Thus smoothing supplies classical differentiability, not
the basic coercivity mechanism. The upper bound
\(\varphi'_{+,s}\leq1\) yields the monotonicity estimate in
Section~\ref{sec:error-estimates}.

The lemma is also the variational counterpart of the Jacobian nonsingularity
obtained, for every iterate and every positive parameter, in noninterior
continuation methods for \(P_0\)-matrix complementarity problems \cite{Ka96};
both results rest on the same property of the smoothing function.
Combining the lemma with \eqref{eq:poincare-inequality} we obtain
$L_h(v;\delta,\delta) \geq c_P C \|\delta\|^2_{H^1(\Omega)}$, and since
$A_{h,s}$
is continuously differentiable in its first argument,
\begin{align}\label{eq:mvt}
A_{h,s}(v_1,w) - A_{h,s}(v_2,w)
= \int_0^1 L_h\big(v_2 + t(v_1 - v_2); v_1 - v_2, w\big)\, dt
\end{align}
so that the choice $w = v_1 - v_2$ exhibits $A_{h,s}$ as strongly monotone on
$V_h$, uniformly in $s$.

\begin{prop}[Unique solvability]
Let $\gamma_0 > C_I$, so that $C > 0$. Then
\begin{align}
c_P C \|v_1 - v_2\|^2_{H^1(\Omega)}
\leq A_{h,s}(v_1, v_1 - v_2) - A_{h,s}(v_2, v_1 - v_2)
\qquad \forall\, v_1, v_2 \in V_h
\end{align}
and the discrete problem has a unique solution $u_h \in V_h$ for every
$s > 0$.
\end{prop}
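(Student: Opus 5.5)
The plan is to obtain the monotonicity inequality directly from the mean-value identity \eqref{eq:mvt} together with Lemma~\ref{lem:coercive}. Then I will deduce existence and uniqueness from strong monotonicity and continuity of the finite-dimensional operator induced by \(A_{h,s}\).

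\emph{Monotonicity.} Fix \(v_1,v_2\in V_h\) and set \(\delta=v_1-v_2\).
\begin{itemize}
\item Because \(\varphi_{+,s}\in C^\infty(\mathbb{R})\) for \(s>0\) and \(P\) is affine, the map \(t\mapsto A_{h,s}(v_2+t\delta,\delta)\) is continuously differentiable.
\item Its derivative is \(L_h(v_2+t\delta;\delta,\delta)\), and this is exactly \eqref{eq:mvt} with \(w=\delta\).
\item Lemma~\ref{lem:coercive} applies at every point \(v_2+t\delta\in V_h\). Hence the integrand is bounded below by \(C\|\nabla\delta\|_\Omega^2\), uniformly in \(t\).
\item Integrating over \(t\in[0,1]\) and applying the Poincar\'e inequality \eqref{eq:poincare-inequality} gives the stated bound with constant \(c_PC\).
\end{itemize}
The integrability needed to justify \eqref{eq:mvt} is straightforward. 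For fixed \(v_2,\delta\) in the finite-dimensional space \(V_h\), the traces and normal derivatives are bounded on \(\Gamma_C\). The function \(\varphi'_{+,s}\) takes values in \([0,1]\). So differentiation under the boundary integral is legitimate by dominated convergence.

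\emph{Existence.} Define \(T:V_h\to V_h\) through the \(a\)-inner product (or any inner product on \(V_h\)) by \((Tv,w)=A_{h,s}(v,w)-l(w)\).
\begin{itemize}
\item \(T\) is continuous, since \(\varphi_{+,s}\) is Lipschitz with constant at most \(1\) and all other terms are bilinear on a finite-dimensional space.
\item \(T\) is strongly monotone by the previous step.
\item Strong monotonicity implies coercivity in the sense \((Tv,v)\ge c_PC\|v\|^2-(\|T0\|)\|v\|\), which is positive on large spheres.
\item The Brouwer-type acute-angle lemma, or equivalently Browder--Minty in finite dimensions, then gives a zero of \(T\).
\end{itemize}
An alternative route to existence is available, since the problem is the Euler--Lagrange equation of a smooth functional. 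That functional is a primitive of \(\varphi_{+,s}\) composed with \(P\), plus the quadratic part, and it is strictly convex and coercive, so a minimizer exists. I would nevertheless use the monotone-operator route, because it needs only what is already established.

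\emph{Uniqueness.} If \(u_h^1\) and \(u_h^2\) are both solutions, insert \(v_1=u_h^1\), \(v_2=u_h^2\) into the inequality. The right-hand side vanishes, so \(\|u_h^1-u_h^2\|_{H^1}=0\).

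The only mildly delicate step is justifying \eqref{eq:mvt} together with the continuity of \(T\); everything else follows directly from the results already stated.
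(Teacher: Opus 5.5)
Your proposal is correct and follows essentially the same route as the paper. The monotonicity bound comes from the mean-value identity with $w=v_1-v_2$, the coercivity lemma, and the Poincar\'e inequality; uniqueness follows immediately from it; and existence comes from continuity and strong monotonicity of the induced operator $T$ on the finite-dimensional space $V_h$, with your justification of differentiation under the boundary integral and of the continuity of $T$ via $0\le\varphi'_{+,s}\le 1$ simply spelling out details the paper leaves implicit.
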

\begin{proof}
The estimate is \eqref{eq:mvt} with $w = v_1 - v_2$, together with the lemma
and the Poincar\'e inequality, and uniqueness follows at once. For existence,
define $T:V_h \rightarrow V_h$ by
$(T(v),w)_{H^1(\Omega)} = A_{h,s}(v,w) - l(w)$ for all $w \in V_h$. Then $T$ is
continuous and strongly monotone on the finite dimensional space $V_h$, hence
surjective, and $T(u_h) = 0$ has a solution.
\end{proof}

\begin{rem}[Consequences for Newton's method]
The method is the stationarity condition of the functional
\begin{align}
E_{h,s}(v) = \frac12 a(v,v)
- \frac{1}{2\gamma}\|\sigma_n(v)\|^2_{\Gamma_C}
- l(v) + \gamma\big(1, \Phi_s(P(v))\big)_{\Gamma_C},
\qquad \Phi_s' = \varphi_{+,s}, \quad \Phi_s(0) = 0
\end{align}
as follows since $P$ is affine with differential $DP$; for $s=0$ we have
$\Phi_0(w) = [w]^2_+/2$, and $E_{h,0}$ is precisely the reduced augmented
Lagrangian \(\mcL_A\) of Section~\ref{sec:finite-element-method}. By the lemma
\(E_{h,s}\) is strongly convex on \(V_h\), uniformly in \(s\).

\emph{Positive definiteness.}
The Newton system is symmetric and positive definite at every iterate and for
every \(s>0\). Each step is therefore well defined and may be computed by a
conjugate gradient method.

\emph{Globalization.}
The Newton direction \(\delta\) satisfies
\(\langle E_{h,s}'(v),\delta\rangle=-L_h(v;\delta,\delta)<0\) and is therefore a
descent direction. A standard backtracking line search satisfying an Armijo
condition gives the usual globalized Newton method, with convergence under the
standard finite-dimensional line-search hypotheses. The computations in
Section~\ref{sec:numerical-examples} use full steps from \(u_{h,0}=0\), so no
globalization is needed there.

\emph{Local convergence.}
For each fixed \(h\) and \(s>0\), the local rate is quadratic.
Lemma~\ref{lem:newton-remainder-estimate} exhibits the factor
\(\gamma s^{-1/2}\), but converting its boundary \(L^4\) norms to the discrete
energy norm introduces mesh-dependent inverse constants. Consequently, the
remainder estimate predicts deterioration of a sufficient local convergence
radius as \(s\to0\), but does not provide an \(h\)-uniform radius. Neither the
solvability of the discrete problem nor the definiteness of the Newton system
deteriorates in the nonsmooth limit.
\end{rem}


\section{Error Estimates}
\label{sec:error-estimates}

We compare the finite element solution \(u_h\) with a continuous central-path
solution \(u_\mu\), and then estimate the distance from \(u_\mu\) to the
Signorini solution \(u\). The key observation is that the regularized method
is exactly consistent with the central-path problem at \(\mu=\gamma s\). The
finite element estimate therefore contains no regularization consistency
term; the effect of the regularization enters only through the continuous
central-path estimate.


\subsection{Central-Path Framework}
\label{subsec:central-path-framework}

The logarithmic-barrier calculation in
Section~\ref{sec:finite-element-method} motivates a continuous barrier
problem on the trace constraint. To formulate it in the natural energy space,
define the extended convex function
\begin{equation}
\Phi(t)\coloneqq
\begin{cases}
-\ln t, & t>0,\\
+\infty, & t\leq0
\end{cases}
\label{eq:extended-logarithmic-barrier}
\end{equation}
and the barrier functional and its effective domain by
\begin{align}
\mathcal B(v)&\coloneqq
\int_{\Gamma_C}\Phi(g-v)\,ds
\label{eq:continuous-barrier-functional}\\
\mathcal D_B&\coloneqq
\{v\in V:\mathcal B(v)<+\infty\}
\label{eq:continuous-barrier-domain}
\end{align}
Because \(g-v\in L^2(\Gamma_C)\), the negative part of
\(\Phi(g-v)\) is integrable. Thus \(\mathcal B(v)\) is well defined in
\(( -\infty,+\infty]\). We impose the barrier-feasibility condition
\begin{equation}
\mathcal D_B\neq\emptyset
\label{eq:barrier-feasibility-condition}
\end{equation}
which is weaker than uniform separation from the obstacle.

\begin{rem}[Barrier feasibility]
\label{rem:barrier-feasibility}
A useful sufficient condition for
\eqref{eq:barrier-feasibility-condition} is the existence of
\(v_0\in K\) and \(\psi\in V\) such that the trace of \(\psi\) is positive
almost everywhere on \(\Gamma_C\) and
\((\ln\psi)_-\in L^1(\Gamma_C)\). Indeed, \(v_0-\psi\in\mathcal D_B\).
For polygonal and polyhedral boundary decompositions, an explicit choice is
\begin{equation}
\psi(x)=\operatorname{dist}(x,\overline\Gamma_D)
\label{eq:barrier-feasibility-distance-function}
\end{equation}
This Lipschitz function belongs to \(V\), its trace is positive in the
relative interior of \(\Gamma_C\), and it vanishes only algebraically at
\(\overline\Gamma_D\cap\overline\Gamma_C\). The boundary singularity
\(-\ln\psi\) is locally integrable there. Consequently,
\(v_0-\psi\in\mathcal D_B\). The condition therefore permits the gap to
approach zero at the Dirichlet--contact interface.
\end{rem}

Set
\begin{align}
E(v)&\coloneqq\frac12a(v,v)-l(v)
\label{eq:unregularized-energy}\\
J_\mu(v)&\coloneqq E(v)+\mu\mathcal B(v)
\label{eq:logarithmic-barrier-energy}
\end{align}

\begin{thm}[Well-posedness of the continuous barrier problem]
\label{thm:continuous-barrier-well-posedness}
Assume \eqref{eq:barrier-feasibility-condition}. For every \(\mu>0\), the
problem
\begin{equation}
u_\mu\in\operatorname*{argmin}_{v\in V}J_\mu(v)
\label{eq:continuous-barrier-minimization}
\end{equation}
has a unique solution. Its gap \(q_\mu=g-u_\mu\) satisfies
\begin{align}
q_\mu&>0
\quad \text{a.e. on }\Gamma_C
\label{eq:central-path-positive-gap}\\
\ln q_\mu&\in L^1(\Gamma_C)
\label{eq:central-path-log-integrability}
\end{align}
and hence \(u_\mu\in K\).
If, in addition, for some \(r>3/2\),
\begin{equation}
u_\mu\in H^r(\Omega)
\quad \text{and}\quad
g\in H^{r-1/2}(\Gamma_C)
\label{eq:fixed-parameter-central-path-regularity}
\end{equation}
then
\begin{equation}
q_\mu^{-1}\in L^2(\Gamma_C)
\label{eq:central-path-reciprocal-integrability}
\end{equation}
and the barrier Euler equation holds in the form
\begin{equation}
a(u_\mu,w)-l(w)+\mu(q_\mu^{-1},w)_{\Gamma_C}=0
\quad \forall w\in V
\label{eq:central-path-weak-problem}
\end{equation}
\end{thm}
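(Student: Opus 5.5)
The plan is to use the direct method of the calculus of variations for existence and uniqueness. The positivity and integrability properties will be read off from finiteness of \(\mathcal B(u_\mu)\). Under the extra regularity, the Euler equation will be identified by one-sided and then two-sided variations combined with Green's formula.

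\emph{Existence and uniqueness.} First, \(J_\mu\) is convex on \(V\), and \(E\) is strictly convex by \eqref{eq:poincare-inequality}, so any minimizer is unique. Moreover \(J_\mu\not\equiv+\infty\) by \eqref{eq:barrier-feasibility-condition}. For coercivity I use \(\ln t\leq t\), which gives \(\Phi(t)\geq -t_+\). Hence
\[
\mathcal B(v)\geq -\|g-v\|_{L^1(\Gamma_C)}\geq -C(1+\|v\|_{H^1(\Omega)}),
\]
by the trace inequality, and therefore \(J_\mu(v)\geq \tfrac{c_P}{2}\|v\|_{H^1}^2-C\|v\|_{H^1}-C\). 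For weak lower semicontinuity, \(E\) is convex and continuous. For \(\mathcal B\), take a minimizing sequence \(v_n\rightharpoonup v\) in \(V\). Compactness of the trace into \(L^2(\Gamma_C)\) gives a subsequence with \(g-v_n\to g-v\) a.e. and in \(L^1\). The integrands \(\Phi(g-v_n)+(g-v_n)_+\) are nonnegative and lower semicontinuous in the argument, so Fatou's lemma together with \(L^1\) convergence of the positive parts yields \(\mathcal B(v)\leq\liminf\mathcal B(v_n)\). Thus a minimizer \(u_\mu\) exists. Since \(J_\mu(u_\mu)<\infty\), we get \(\Phi(q_\mu)\in L^1\), which forces \(q_\mu>0\) a.e. Combining this with \((\ln q_\mu)_+\leq q_\mu\in L^2\) gives \(\ln q_\mu\in L^1(\Gamma_C)\), and \(u_\mu\in K\) follows.

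\emph{One-sided variation and reciprocal integrability.} Now assume \eqref{eq:fixed-parameter-central-path-regularity}. Since \(r>3/2\), the flux \(\sigma_n(u_\mu)\) lies in \(L^2\) on each contact face. Take \(w\in V\) with \(w\leq0\) on \(\Gamma_C\). Then \(u_\mu+tw\in\mathcal D_B\) for \(t>0\), and the difference quotients \(t^{-1}(\Phi(q_\mu-tw)-\Phi(q_\mu))\) are monotone in \(t\) by convexity, increasing to \(w/q_\mu\leq 0\) as \(t\downarrow0\). By monotone convergence, minimality gives
\[
a(u_\mu,w)-l(w)+\mu(q_\mu^{-1},w)_{\Gamma_C}\geq0 .
\]
Testing with \(w\in C_c^\infty(\Omega)\) of both signs yields \(-\Delta u_\mu=f\). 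Green's formula then gives \((\sigma_n(u_\mu)+\mu q_\mu^{-1},w)_{\Gamma_C}\geq0\) for all such \(w\leq0\). Traces of nonpositive functions in \(V\) are dense in the nonpositive functions of \(L^2\) on compact subsets of \(\Gamma_C\), so we obtain \(0<\mu q_\mu^{-1}\leq-\sigma_n(u_\mu)\) a.e. This proves \eqref{eq:central-path-reciprocal-integrability}.

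\emph{Two-sided variation, the expected main obstacle.} Directions that decrease the gap are the delicate part, since \(q_\mu\) need not be bounded away from zero, and in \(d=3\) it need not even be continuous. The plan is to use multiplicative directions whose trace is \(q_\mu\zeta\), with \(\zeta\) smooth and compactly supported in each open contact face and \(|\zeta|\leq 1\). Concretely, take \(w=\chi\,(\tilde g-u_\mu)\), where \(\tilde g\in H^r\) extends \(g\) and \(\chi\) is a smooth cutoff. The point of requiring \(g\in H^{r-1/2}\) is exactly that this gives \(w\in V\). Along such directions the gap becomes \(q_\mu(1-t\zeta)>0\) for \(|t|<1\), and \(\Phi(q_\mu(1-t\zeta))-\Phi(q_\mu)=-\ln(1-t\zeta)\) is bounded and smooth in \(t\). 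Dominated convergence then gives a two-sided derivative, and we obtain \((\sigma_n(u_\mu)q_\mu+\mu,\zeta)_{\Gamma_C}=0\) for all such \(\zeta\). Hence \(\sigma_n(u_\mu)=-\mu q_\mu^{-1}\) a.e. Substituting this into Green's formula for arbitrary \(w\in V\) gives \eqref{eq:central-path-weak-problem}.
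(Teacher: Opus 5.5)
Your proof is correct. For the Euler equation it takes a genuinely different route from the paper.

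\emph{Existence and uniqueness.} This is essentially the paper's direct method. The one difference is that you use the linear bound $\Phi(t)\ge -t_+$ where the paper uses $-\ln t\ge-\varepsilon t^2-C_\varepsilon$. Yours is slightly simpler and needs no $\mu$-dependent choice of $\varepsilon$.

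\emph{Boundary law.} The paper uses the Sobolev embedding to make $q_\mu$ continuous. It then localizes to the relatively open, full-measure set $G_\mu=\{q_\mu>0\}$, where $q_\mu$ is bounded below on compact sets, and performs additive two-sided variations with traces in $C_c^\infty(G_\mu)$. Only after obtaining the equality does it read off $q_\mu^{-1}\in L^2(\Gamma_C)$.

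You instead use gap-increasing one-sided variations to get $\mu q_\mu^{-1}\le-\sigma_n(u_\mu)$. You then use gap-proportional variations $w=\chi(\tilde g-u_\mu)$, which keep the gap equal to $q_\mu(1-t\zeta)>0$ for both signs of $t$ without any lower bound on $q_\mu$. This yields the product law $q_\mu\sigma_n(u_\mu)=-\mu$ directly.

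Your route uses nothing beyond $\sigma_n(u_\mu)\in L^2(\Gamma_C)$ and an $H^1$ lifting of $g$, so it is more general. The paper's route is shorter, given the regularity it already assumes. Once the product law is in hand, your one-sided step is logically redundant: it only supplies $-\Delta u_\mu=f$, which interior variations of both signs give anyway.

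Three statements in your write-up should be corrected, though none affects validity.
\begin{enumerate}
\item $q_\mu$ \emph{is} continuous under \eqref{eq:fixed-parameter-central-path-regularity} also for $d=3$. Since $r>3/2\geq d/2$, we have $H^r(\Omega)\subset C^0(\overline\Omega)$, and $r-1/2>1$ gives continuity of $g$ on each two-dimensional face. This is exactly what the paper's argument exploits.
\item $g\in H^{1/2}(\Gamma_C)$ already suffices to make $w=\chi(\tilde g-u_\mu)\in V$. The extra regularity of $g$ is in fact never used in your argument.
\item By convexity, the quotients $t^{-1}(\Phi(q_\mu-tw)-\Phi(q_\mu))$ \emph{decrease}, not increase, to $w/q_\mu$ as $t\downarrow0$. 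Since they are bounded above by $0$, the decreasing form of monotone convergence still gives your one-sided inequality.
\end{enumerate}
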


\begin{proof}
For every \(\varepsilon>0\), there is a constant
\(C_\varepsilon\) such that
\begin{equation}
-\ln t\geq-\varepsilon t^2-C_\varepsilon
\quad \forall t>0
\label{eq:logarithm-quadratic-lower-bound}
\end{equation}
The trace inequality, Poincar\'e's inequality, and Young's inequality
therefore show, after choosing \(\varepsilon\) sufficiently small depending
on \(\mu\), that
\begin{equation}
J_\mu(v)\geq c\|v\|_{H^1(\Omega)}^2-C
\quad \forall v\in V
\label{eq:barrier-energy-coercivity}
\end{equation}
with \(c>0\). Thus \(J_\mu\) is coercive.

Let \(v_j\rightharpoonup v\) in \(V\). Compactness of the trace from
\(H^1(\Omega)\) into \(L^2(\Gamma_C)\) gives, after passage to a
subsequence, \(g-v_j\to g-v\) strongly in \(L^2(\Gamma_C)\) and almost
everywhere. By \eqref{eq:logarithm-quadratic-lower-bound}, the integrand
\(\Phi(t)+\varepsilon t^2+C_\varepsilon\) is nonnegative. Fatou's lemma,
together with the strong \(L^2(\Gamma_C)\) convergence of the gaps, gives
\begin{equation}
\mathcal B(v)\leq\liminf_{j\to\infty}\mathcal B(v_j)
\label{eq:barrier-functional-lower-semicontinuity}
\end{equation}
Hence \(J_\mu\) is sequentially weakly lower semicontinuous. It is proper by
\eqref{eq:barrier-feasibility-condition}, so the direct method gives a
minimizer. The functional \(E\) is strongly convex by
\eqref{eq:poincare-inequality}, while \(\mathcal B\) is convex. Their sum is
therefore strongly convex, which proves uniqueness. Finally, finiteness of
\(\mathcal B(u_\mu)\) gives \eqref{eq:central-path-positive-gap}. Since
\(q_\mu\in L^2(\Gamma_C)\), its positive logarithmic part is integrable;
finiteness of the barrier gives integrability of the negative part and hence
\eqref{eq:central-path-log-integrability}.

It remains to prove the final assertion. Under
\eqref{eq:fixed-parameter-central-path-regularity}, the traces of \(u_\mu\)
and \(g\) are continuous on \(\Gamma_C\). On each planar contact face \(F\),
the normal trace satisfies
\(\sigma_n(u_\mu)|_F\in H^{r-3/2}(F)\). Thus the broken normal trace belongs
to \(L^2(\Gamma_C)\), without requiring continuity across polyhedral edges.
Hence the continuous representative of \(q_\mu\) is nonnegative, and the
relatively open set
\(G_\mu=\{x\in\Gamma_C:q_\mu(x)>0\}\) has full surface measure.
Variations with zero trace first give \(-\Delta u_\mu=f\) in \(\Omega\).
For \(\eta\in C_c^\infty(G_\mu)\), choose \(w\in V\) whose trace on
\(\Gamma_C\) is \(\eta\). Since \(q_\mu\) is bounded away from zero on the
support of \(\eta\), both signs of \(u_\mu+tw\) belong to
\(\mathcal D_B\) for sufficiently small \(t\). Differentiating
\(J_\mu(u_\mu+tw)\) at \(t=0\) and using Green's formula gives
\begin{equation}
(\sigma_n(u_\mu)+\mu q_\mu^{-1},\eta)_{\Gamma_C}=0
\quad \forall \eta\in C_c^\infty(G_\mu)
\label{eq:localized-central-path-equation}
\end{equation}
Thus
\begin{equation}
\sigma_n(u_\mu)=-\frac{\mu}{g-u_\mu}<0
\quad \text{a.e. on }\Gamma_C
\label{eq:central-path-flux}
\end{equation}
because \(G_\mu\) has full measure. Since the left-hand side belongs to
\(L^2(\Gamma_C)\), relation \eqref{eq:central-path-flux} proves
\eqref{eq:central-path-reciprocal-integrability}; Green's formula then extends
\eqref{eq:localized-central-path-equation} to all \(w\in V\), proving
\eqref{eq:central-path-weak-problem}.
\end{proof}

Theorem~\ref{thm:continuous-barrier-well-posedness} establishes the central
path in the natural variational sense. Finite logarithmic energy alone does
not imply reciprocal integrability, but the Sobolev regularity already used
for finite element approximation yields the boundary Euler equation. In
particular, \eqref{eq:central-path-flux} gives
\begin{equation}
(u_\mu-g)\sigma_n(u_\mu)=\mu
\quad \text{a.e. on }\Gamma_C
\label{eq:central-path-complementarity}
\end{equation}
By Remark~\ref{rem:interior-point-relation}, these relations are equivalent to
\begin{equation}
\sigma_n(u_\mu)=-\gamma\varphi_{+,s}(P(u_\mu))
\quad s=\mu/\gamma
\label{eq:central-path-nitsche-relation}
\end{equation}
for any \(\gamma>0\).

\begin{lem}[Central-path estimate]
\label{lem:central-path-estimate}
It holds
\begin{equation}
\|u-u_\mu\|_{H^1(\Omega)}^2
\leq c_P^{-1}\lvert\Gamma_C\rvert\mu
\label{eq:central-path-estimate}
\end{equation}
\end{lem}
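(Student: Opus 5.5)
The plan is to compare \(u_\mu\) with convex combinations of \(u_\mu\) and \(u\), and then pass to the limit in the combination parameter. This route avoids all regularity assumptions: \(u\) itself need not lie in \(\mathcal D_B\), so it cannot be inserted directly into \(J_\mu\), but convex combinations \(v_t=(1-t)u_\mu+tu\) with \(t\in(0,1)\) can. By Theorem~\ref{thm:continuous-barrier-well-posedness}, \(u_\mu\in K\cap\mathcal D_B\). Set \(e=u-u_\mu\) and write \(q=g-u\geq0\) a.e. on \(\Gamma_C\). The gap of \(v_t\) is
\begin{equation*}
(1-t)q_\mu+tq\geq(1-t)q_\mu>0 .
\end{equation*}
Since \(\Phi\) is decreasing, this gives, pointwise a.e.,
\begin{equation*}
\Phi(g-v_t)\leq\Phi(q_\mu)-\ln(1-t).
\end{equation*}
Integrating over \(\Gamma_C\) yields \(\mathcal B(v_t)\leq\mathcal B(u_\mu)-\lvert\Gamma_C\rvert\ln(1-t)<\infty\). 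In particular \(v_t\in\mathcal D_B\), so \(v_t\) is an admissible competitor for \(J_\mu\).

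Minimality of \(u_\mu\) then gives \(J_\mu(u_\mu)\leq J_\mu(v_t)\), that is,
\begin{equation*}
E(u_\mu)-E(v_t)\leq\mu\bigl(\mathcal B(v_t)-\mathcal B(u_\mu)\bigr)\leq-\mu\lvert\Gamma_C\rvert\ln(1-t).
\end{equation*}
Since \(E\) is quadratic, it expands exactly as
\begin{equation*}
E(v_t)=E(u_\mu)+t\bigl(a(u_\mu,e)-l(e)\bigr)+\tfrac{t^2}{2}a(e,e).
\end{equation*}
To control the linear term, I use the variational inequality \eqref{eq:signorini-variational-inequality} with test function \(v=u_\mu\in K\). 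It gives \(a(u,u_\mu-u)-l(u_\mu-u)\geq0\), and hence
\begin{equation*}
a(u_\mu,-e)-l(-e)=a(u,-e)-l(-e)+a(e,e)\geq a(e,e).
\end{equation*}
Substituting these two facts into the minimality inequality gives
\begin{equation*}
t\,a(e,e)-\tfrac{t^2}{2}a(e,e)\leq-\mu\lvert\Gamma_C\rvert\ln(1-t).
\end{equation*}

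Dividing by \(t\) and letting \(t\to0^+\), the right-hand side tends to \(\mu\lvert\Gamma_C\rvert\) because \(-\ln(1-t)/t\to1\). This yields \(a(e,e)\leq\mu\lvert\Gamma_C\rvert\). The Poincar\'e inequality \eqref{eq:poincare-inequality} then gives \(c_P\|e\|_{H^1(\Omega)}^2\leq\mu\lvert\Gamma_C\rvert\), which is the claim \eqref{eq:central-path-estimate} with exactly the stated constant.

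The only delicate point is the admissibility step: the Signorini solution \(u\) may touch the obstacle on a set of positive measure, so \(\mathcal B(u)=+\infty\) and \(u\) cannot be compared with \(u_\mu\) directly. The monotone lower bound \((1-t)q_\mu\) on the gap of \(v_t\) is what keeps every competitor barrier-finite. It also produces the explicit logarithmic cost \(-\ln(1-t)\), whose first-order behaviour \(t\) delivers the sharp constant \(\lvert\Gamma_C\rvert\). No Euler equation, reciprocal integrability, or normal-flux regularity for \(u_\mu\) is needed.
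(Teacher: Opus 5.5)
Your proof is correct and follows essentially the same route as the paper. Both use the convex combination \((1-t)u_\mu+tu\), the gap bound \((1-t)q_\mu\) that yields the barrier cost \(-\mu\lvert\Gamma_C\rvert\ln(1-t)\), the variational inequality tested with \(u_\mu\in K\), the limit \(t\to0\), and Poincar\'e. The only difference is cosmetic: you insert the variational inequality into the exact quadratic expansion of \(E\) before letting \(t\to0\), whereas the paper first extracts \(\langle E'(u_\mu),u-u_\mu\rangle\geq-\mu\lvert\Gamma_C\rvert\) and then uses monotonicity of \(E'\).
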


\begin{proof}
Let \(q=g-u\geq0\) on \(\Gamma_C\), and, for \(0<t<1\), set
\begin{align}
u_t&\coloneqq(1-t)u_\mu+tu
\label{eq:central-path-comparison-function}\\
q_t&\coloneqq g-u_t=(1-t)q_\mu+tq\geq(1-t)q_\mu
\label{eq:central-path-comparison-gap}
\end{align}
Thus \(u_t\in\mathcal D_B\). Minimality of \(u_\mu\) and
\eqref{eq:central-path-comparison-gap} imply
\begin{equation}
0\leq J_\mu(u_t)-J_\mu(u_\mu)
\leq E(u_t)-E(u_\mu)-\mu\lvert\Gamma_C\rvert\ln(1-t)
\label{eq:central-path-minimality-comparison}
\end{equation}
Dividing by \(t\) and letting \(t\to0\) gives
\begin{equation}
\langle E'(u_\mu),u-u_\mu\rangle
\geq-\mu\lvert\Gamma_C\rvert
\label{eq:central-path-one-sided-energy-bound}
\end{equation}
Since \(u_\mu\in K\), the variational inequality
\eqref{eq:signorini-variational-inequality} gives
\begin{equation}
\langle E'(u),u_\mu-u\rangle\geq0
\label{eq:central-path-admissible-test}
\end{equation}
Using \eqref{eq:poincare-inequality},
\eqref{eq:central-path-one-sided-energy-bound}, and
\eqref{eq:central-path-admissible-test}, we obtain
\begin{align}
c_P\|u_\mu-u\|_{H^1(\Omega)}^2
&\leq a(u_\mu-u,u_\mu-u)
\label{eq:central-path-proof-coercivity}\\
&=\langle E'(u_\mu)-E'(u),u_\mu-u\rangle
\label{eq:central-path-proof-energy-difference}\\
&\leq\langle E'(u_\mu),u_\mu-u\rangle
\label{eq:central-path-proof-variational-inequality}\\
&\leq\mu\lvert\Gamma_C\rvert
\label{eq:central-path-proof-conclusion}
\end{align}
\end{proof}

\begin{prop}[Uniform local \(H^2\) regularity]
\label{prop:uniform-local-central-path-regularity}
Let \(F\) be contained in the relative interior of a planar face of
\(\Gamma_C\), and let \(U_0\Subset U_1\) be boundary neighborhoods such that
\(\overline U_1\cap\partial\Omega\Subset F\). Assume that, on
\(U_1\cap\Gamma_C\), the obstacle is the trace of a function
\(G\in H^2(U_1\cap\Omega)\). Then, for every \(\mu_0>0\),
\begin{equation}
\sup_{0<\mu\leq\mu_0}
\|u_\mu\|_{H^2(U_0\cap\Omega)}\leq C
\label{eq:uniform-local-central-path-h2-bound}
\end{equation}
Here \(C\) depends on the nested patches and their cutoff geometry, the
Poincar\'e and trace constants, \(\mu_0\), a fixed
\(v_\star\in\mathcal D_B\), and the indicated norms of \(f\), \(g\), and
\(G\), but not on \(\mu\). In particular, the logarithmic boundary layer does
not destroy local \(H^2\) regularity on neighborhoods that may contain a
free-boundary point of the limiting Signorini solution.
\end{prop}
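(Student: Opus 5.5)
We establish the uniform local \(H^2\) bound by Nirenberg's tangential difference-quotient method, applied to the variational characterization of \(u_\mu\) as a minimizer of \(J_\mu\). The convexity and monotonicity of the barrier replace the boundary Euler equation, which is not yet available without regularity. The first step is a uniform \(H^1\) bound. Comparing \(J_\mu(u_\mu)\leq J_\mu(v_\star)\) and using the lower bound \eqref{eq:logarithm-quadratic-lower-bound} with \(\varepsilon\) fixed gives \(\sup_{0<\mu\leq\mu_0}\|u_\mu\|_{H^1(\Omega)}\leq C\). The constant involves only \(\mu_0\), \(v_\star\), \(f\), \(g\), and the trace and Poincar\'e constants. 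Alternatively, Lemma~\ref{lem:central-path-estimate} combined with the bound on \(u\) gives the same conclusion.

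Next, fix a cutoff \(\eta\in C_c^\infty(U_1)\) with \(\eta=1\) on \(U_0\). Let \(\tau\) be a tangential direction of the planar face, and write \(\delta_h v(x)=(v(x+h\tau)-v(x))/h\). Translations in \(\tau\) preserve \(\Omega\cap U_1\) locally and map \(F\) into itself, because \(\overline U_1\cap\partial\Omega\Subset F\) and the face is planar. Set \(w=u_\mu-G\) near \(F\), so that the gap is \(q_\mu=-w\) on \(U_1\cap\Gamma_C\).

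We then build a competitor whose trace is a convex combination of translates. For small \(t>0\), define
\begin{equation*}
v_t=u_\mu+t\,\delta_{-h}\bigl(\eta^2\delta_h w\bigr).
\end{equation*}
Expanding, \(v_t\) lies pointwise on the segment \((1-\theta)w(x)+\theta w(x\pm h\tau)\) whenever \(t\eta^2/h^2\) is small. Since \(w(x\pm h\tau)<0\) a.e. as well, \(v_t\in\mathcal D_B\), and convexity of \(\Phi\) bounds \(\mathcal B(v_t)\) by a convex combination of translated barrier integrals. Minimality \(J_\mu(v_t)\geq J_\mu(u_\mu)\), divided by \(t\) with \(t\to0\), yields a one-sided inequality. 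Its barrier contribution is
\begin{equation*}
\mu\int \eta^2\,\delta_h(\Phi(q_\mu))\,\delta_h(\ldots)\ \text{type terms},
\end{equation*}
and by monotonicity of \(\Phi'\) (the monotone inverse-gap law) these have the favorable sign. Precisely, \(\bigl(\Phi'(q(x+h\tau))-\Phi'(q(x))\bigr)\bigl(q(x+h\tau)-q(x)\bigr)\geq0\), so the barrier term can be dropped. What remains is
\begin{equation*}
\|\eta\nabla\delta_h w\|_{L^2}^2\lesssim \|f\|\,\|\delta_{-h}(\eta^2\delta_h w)\| +\text{lower order}+\|G\|_{H^2},
\end{equation*}
which gives a uniform bound on the tangential derivatives \(\nabla\partial_\tau u_\mu\) in \(L^2(U_0'\cap\Omega)\). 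The normal second derivative is then recovered from the interior equation \(-\Delta u_\mu=f\), which holds from variations with zero trace, via \(\partial_{nn}u_\mu=-f-\sum_\tau\partial_{\tau\tau}u_\mu\). This completes \eqref{eq:uniform-local-central-path-h2-bound}.

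The main obstacle is making the barrier step rigorous without the Euler equation \eqref{eq:central-path-weak-problem}. We need the directional derivative of \(\mathcal B\) along \(v_t\) to exist, or at least satisfy the needed one-sided inequality, knowing only \(\ln q_\mu\in L^1\). I would avoid differentiating. Instead, I would compare \(J_\mu(u_\mu)\) with the symmetrized pair of competitors \(u_\mu\pm\) small translated averages. Jensen's inequality for \(\Phi\) at the discrete level, together with the translation invariance \(\int\eta^2\Phi(q(\cdot+h\tau))\approx\int\eta^2(\cdot-h\tau)\Phi(q)\), shows that the barrier change is \(\leq\mu\,O(\|\nabla\eta\|_\infty)\int_{\operatorname{supp}\eta}|\Phi(q_\mu)|\). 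This is only a lower-order term. It is uniformly bounded because \(\mu\mathcal B(u_\mu)\) is bounded above by the energy comparison and below by the \(L^2\) bound on the gap. The cost is an extra \(t\)-independent error, which I would absorb by choosing \(t\sim h^2\) and exploiting strong convexity of \(E\) instead of a first-order limit.
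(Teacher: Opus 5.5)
\textbf{Gap in the barrier step.} The barrier step is the core of the proof, and the version you propose does not close. As you note, the formal monotonicity argument with $\Phi'$ needs a directional derivative of $\mathcal B$ at $u_\mu$, and that derivative is not available. Your replacement fails for a different reason. Suppose minimality only gives $E(u_\mu+t\phi)-E(u_\mu)\geq-M$, with $\phi=\delta_{-h}(\eta^2\delta_h w)$ and an error $M$ independent of $t$. Since $E(u_\mu+t\phi)-E(u_\mu)=t\langle E'(u_\mu),\phi\rangle+\tfrac{t^2}{2}a(\phi,\phi)$, you get
\[
\|\eta\,\delta_h\nabla w\|_{L^2}^2\leq\frac{t}{2}\,a(\phi,\phi)+\frac{M}{t}+(\text{lower-order and data terms}).
\]
The choice $t\sim h^2$ absorbs the quadratic term, but then $M/t\sim Mh^{-2}$ is not uniform in $h$. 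You therefore lose the limit $h\to0$ that produces the second derivatives. The exact quadratic expansion of $E$ only adds a cost, and nothing in it compensates $M/t$. Any error would have to be $O(t)$ uniformly in $h$. The symmetrized competitor $u_\mu-t\phi$ is also inadmissible in general: its gap is not a convex combination of translates and can become nonpositive. Only the $+$ direction is needed, though.

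\textbf{The fix: the error is exactly nonpositive.} Put $\alpha_+(x)=t\eta(x)^2/h^2$ and $\alpha_-(x)=t\eta(x-h\tau)^2/h^2$. For $0<t\leq h^2/(2\|\eta\|_\infty^2)$, the gap of $v_t=u_\mu+t\phi$ is
$q_t=(1-\alpha_+-\alpha_-)q_\mu(x)+\alpha_+q_\mu(x+h\tau)+\alpha_-q_\mu(x-h\tau)$. Jensen's inequality then gives
\[
\mathcal B(v_t)-\mathcal B(u_\mu)\leq\frac{t}{h^2}\int\Bigl[\eta^2(x)\bigl(\Phi(q_\mu(x+h\tau))-\Phi(q_\mu(x))\bigr)+\eta^2(x-h\tau)\bigl(\Phi(q_\mu(x-h\tau))-\Phi(q_\mu(x))\bigr)\Bigr]\,ds=0 .
\]
The right side vanishes because the substitution $x\mapsto x+h\tau$ along the flat face is exact; the relation you wrote with $\approx$ is an equality. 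All four integrals are finite, since $\ln q_\mu\in L^1(\Gamma_C)$ and the supports stay inside $F$. Hence $E(u_\mu+t\phi)\geq E(u_\mu)$, and letting $t\downarrow0$ yields $a(u_\mu,\phi)-l(\phi)\geq0$. After discrete integration by parts, this is the inequality form of \eqref{eq:localized-difference-quotient-identity-left}--\eqref{eq:localized-difference-quotient-identity-right} with the nonnegative boundary term already removed. The rest of your argument then goes through.

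\textbf{Comparison with the paper.} Once repaired, your route genuinely differs from the paper's. The paper replaces $\Phi$ by its Moreau envelope, so that the approximate minimizers satisfy an Euler equation with a Lipschitz monotone boundary law. It tests that equation with difference quotients, and then needs an $\varepsilon$-uniform $H^1$ bound and a Mosco-convergence argument to let $\varepsilon\to0$ (Lemmas~\ref{lem:moreau-uniform-h1-bound}--\ref{lem:moreau-central-path-limit}). Your convex-combination competitor is the classical device for obstacle-type variational inequalities. It works directly with the minimizer of $J_\mu$ and uses only $\ln q_\mu\in L^1(\Gamma_C)$ from Theorem~\ref{thm:continuous-barrier-well-posedness}, so it removes the approximation-and-limit layer altogether. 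The paper's route, in exchange, works with a genuine equation throughout and fits the standard penalization framework for boundary obstacle problems.
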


\begin{proof}
Set \(\psi(t)=\Phi(-t)\) and introduce its Moreau envelope
\begin{equation}
\psi_\varepsilon(t)
\coloneqq\inf_{\xi\in\mathbb R}
\left(\psi(\xi)+\frac{|t-\xi|^2}{2\varepsilon}\right)
\label{eq:moreau-logarithmic-barrier}
\end{equation}
The function \(\psi_\varepsilon\) is finite and continuously differentiable,
and \(\psi_\varepsilon'\) is Lipschitz and monotone. Let
\(u_{\mu,\varepsilon}\) minimize
\begin{equation}
J_{\mu,\varepsilon}(v)
\coloneqq E(v)+\mu\int_{\Gamma_C}\psi_\varepsilon(v-g)\,ds
\label{eq:regularized-barrier-functional}
\end{equation}
It satisfies
\begin{equation}
a(u_{\mu,\varepsilon},w)-l(w)
+\mu(\psi_\varepsilon'(u_{\mu,\varepsilon}-g),w)_{\Gamma_C}=0
\quad \forall w\in V
\label{eq:regularized-central-path-euler-equation}
\end{equation}
Lemma~\ref{lem:moreau-uniform-h1-bound} proves, with all smallness choices
made only in terms of \(\mu_0\), that the minimizers are uniformly bounded in
\(H^1(\Omega)\). On a flat boundary patch, set
\(z=u_{\mu,\varepsilon}-G\) and
\(\beta_{\mu,\varepsilon}=\mu\psi_\varepsilon'\). For every tangential
difference quotient, the boundary contribution is
\begin{equation}
(\delta_h\beta_{\mu,\varepsilon}(z),
\eta^2\delta_hz)_{\Gamma_C}\geq0
\label{eq:local-barrier-difference-monotonicity}
\end{equation}
by monotonicity. Lemma~\ref{lem:local-monotone-boundary-h2} expands the
discrete integration-by-parts calculation, estimates the cutoff commutators
and the \(L^2\) right-hand side, and obtains the local \(H^2\) bound uniformly
in \(\mu\) and \(\varepsilon\). Finally,
Lemma~\ref{lem:moreau-central-path-limit} verifies Mosco convergence of
\eqref{eq:regularized-barrier-functional}, proves strong \(H^1\) convergence
\(u_{\mu,\varepsilon}\to u_\mu\), and passes the local estimate to the limit.
These three lemmas are proved in
Appendix~\ref{sec:local-regularity-details} and give
\eqref{eq:uniform-local-central-path-h2-bound}.
\end{proof}

\begin{rem}[Local versus global regularity]
\label{rem:local-versus-global-central-path-regularity}
The monotonicity mechanism in
\eqref{eq:local-barrier-difference-monotonicity} is analogous to that used for
uniformly regular penalized boundary-obstacle approximations
\cite{MeNi05,Ja17}. Proposition~\ref{prop:uniform-local-central-path-regularity}
is local and deliberately avoids
\(\overline\Gamma_D\cap\overline\Gamma_C\) and the edges of polyhedral contact
faces. At those sets, mixed-boundary corner and edge singularities determine
the available global Sobolev exponent; in two-dimensional polygons, their
dependence on the opening angle is described in \cite{ApNi20}. Thus the
proposition isolates the logarithmic layer from the geometric obstruction but
does not replace the global assumption
\eqref{eq:uniform-central-path-regularity}. Obtaining uniform estimates above
\(H^2\), up to the generic \(H^{5/2-\epsilon}\) ceiling, would require a
separate parameter-uniform analysis near the free boundary of the limiting
Signorini solution.
\end{rem}


\subsection{Quasi-optimal Approximation}
\label{subsec:quasi-optimal-approximation}

For the finite element analysis we let $e = u_\mu - u_h$ and, for arbitrary
$v \in V_h$, write $e = \eta + e_h$ where $\eta = u_\mu - v$ and
$e_h = v - u_h \in V_h$. We assume in this subsection that
\eqref{eq:fixed-parameter-central-path-regularity} holds. Thus
Theorem~\ref{thm:continuous-barrier-well-posedness} supplies the boundary
Euler equation, \(\sigma_n(u_\mu)\in L^2(\Gamma_C)\), and Green's formula.
Uniform
Sobolev regularity as \(\mu\to0\) is imposed only in
Corollary~\ref{cor:regularity-dependent-error-estimate}. We use the notation
\begin{equation}
\Delta \varphi_s = \varphi_{+,s}(P(u_\mu)) - \varphi_{+,s}(P(u_h))
\label{eq:regularized-contact-difference}
\end{equation}

The analysis rests on three observations. First, \emph{consistency}: by
Green's formula $a(u_\mu,w) = l(w) + (\sigma_n(u_\mu), w)_{\Gamma_C}$
and the central path relation
$\sigma_n(u_\mu) = -\gamma\varphi_{+,s}(P(u_\mu))$ we obtain, for all
$w \in V_h$,
\begin{align}
a_h(u_\mu,w) + \gamma (\varphi_{+,s}(P(u_\mu)), DP(w))_{\Gamma_C}
&= a(u_\mu,w) - \gamma^{-1}(\sigma_n(u_\mu), \sigma_n(w))_{\Gamma_C}
\nonumber \\
&\qquad
- (\sigma_n(u_\mu), w - \sigma_n(w)/\gamma)_{\Gamma_C}
\nonumber \\
&= a(u_\mu,w) - (\sigma_n(u_\mu), w)_{\Gamma_C}= l(w)
\end{align}
that is, the regularized method is consistent with the central path problem at
$\mu = \gamma s$. Subtracting the method
$a_h(u_h,w) + \gamma(\varphi_{+,s}(P(u_h)), DP(w))_{\Gamma_C} = l(w)$
yields the error equation
\begin{align}
a_h(e,w) + \gamma(\Delta \varphi_s, DP(w))_{\Gamma_C} = 0
\qquad \forall w \in V_h
\end{align}
Secondly, \emph{monotonicity}: since $0 \leq \varphi'_{+,s} \leq 1$, the mean
value theorem gives, for all $x,y \in \IR$,
\begin{align}
(\varphi_{+,s}(x) - \varphi_{+,s}(y))^2
= \varphi'_{+,s}(\xi) (x-y) (\varphi_{+,s}(x) - \varphi_{+,s}(y))
\leq (x-y)(\varphi_{+,s}(x) - \varphi_{+,s}(y))
\end{align}
where we used that $(x-y)(\varphi_{+,s}(x) - \varphi_{+,s}(y)) =
\varphi'_{+,s}(\xi)(x-y)^2 \geq 0$. Since $P$ is affine we have
$P(u_\mu) - P(u_h) = DP(e)$, and applying the inequality pointwise on
$\Gamma_C$ we obtain the monotonicity estimate
\begin{align}
\gamma \|\Delta\varphi_s\|^2_{\Gamma_C}
\leq \gamma (\Delta \varphi_s, DP(e))_{\Gamma_C}
\end{align}
Thirdly, the \emph{inverse inequality} \eqref{eq:inverse}, which
for $w \in V_h$ and $\gamma = \gamma_0/h$ reads
\begin{align}
\gamma^{-1} \|\sigma_n(w)\|^2_{\Gamma_C}
\leq \frac{C_I}{\gamma_0} \|\nabla w\|^2_{\Omega}
\end{align}

\begin{thm}[Quasi-optimal approximation of the central path]
\label{thm:central-path-quasi-optimality}
For $\gamma_0$ sufficiently large it holds
\begin{align}
\|u_\mu - u_h\|^2_{H^1(\Omega)} + \gamma \|\Delta \varphi_s \|^2_{\Gamma_C}
\lesssim
\inf_{v \in V_h} \Big( \|u_\mu - v\|^2_{H^1(\Omega)}
+ \gamma \|u_\mu - v\|^2_{\Gamma_C}
+ \gamma^{-1} \|\sigma_n(u_\mu - v)\|^2_{\Gamma_C} \Big)
\end{align}
In particular, the regularized method is quasi-optimal with respect to the
central path solution, with constants independent of $s$.
\end{thm}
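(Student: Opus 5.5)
We prove the estimate by a standard Strang-type energy argument built on the three observations above: consistency (the error equation), monotonicity, and the inverse inequality. Fix \(v\in V_h\) and split \(e=\eta+e_h\) with \(\eta=u_\mu-v\), \(e_h=v-u_h\). The discrete error \(e_h\) is controlled by testing the error equation with \(w=e_h\):
\begin{align*}
a_h(e_h,e_h)+\gamma(\Delta\varphi_s,DP(e_h))_{\Gamma_C}
=-a_h(\eta,e_h)-\gamma(\Delta\varphi_s,DP(\eta))_{\Gamma_C}+\gamma(\Delta\varphi_s,DP(e))_{\Gamma_C}-\gamma(\Delta\varphi_s,DP(e))_{\Gamma_C}
\end{align*}
More directly, write \(DP(e_h)=DP(e)-DP(\eta)\). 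The left side is then
\(a_h(e_h,e_h)+\gamma(\Delta\varphi_s,DP(e))_{\Gamma_C}-\gamma(\Delta\varphi_s,DP(\eta))_{\Gamma_C}\), and the error equation gives
\[
a_h(e_h,e_h)+\gamma(\Delta\varphi_s,DP(e))_{\Gamma_C}
=-a_h(\eta,e_h)+\gamma(\Delta\varphi_s,DP(\eta))_{\Gamma_C}.
\]
By Lemma~\ref{lem:coercive} (more precisely its \(a_h\) part, with \(\gamma_0>C_I\)) and the Poincar\'e inequality, \(a_h(e_h,e_h)\geq c_PC\|e_h\|^2_{H^1(\Omega)}\). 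By the monotonicity estimate, \(\gamma(\Delta\varphi_s,DP(e))_{\Gamma_C}\geq\gamma\|\Delta\varphi_s\|^2_{\Gamma_C}\). The left side therefore dominates \(c\|e_h\|_{H^1}^2+\gamma\|\Delta\varphi_s\|_{\Gamma_C}^2\).

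The right side is bounded term by term; this is the step that needs care, because \(\eta\notin V_h\) and the inverse inequality cannot be applied to it. Expand
\(a_h(\eta,e_h)=a(\eta,e_h)-\gamma^{-1}(\sigma_n(\eta),\sigma_n(e_h))_{\Gamma_C}\). The first term is at most \(\|\nabla\eta\|\,\|\nabla e_h\|\). For the second, Cauchy--Schwarz and \eqref{eq:inverse} applied only to \(e_h\) give the bound \(\gamma^{-1/2}\|\sigma_n(\eta)\|_{\Gamma_C}(C_I/\gamma_0)^{1/2}\|\nabla e_h\|\). The boundary term satisfies
\[
\gamma(\Delta\varphi_s,DP(\eta))_{\Gamma_C}\leq\gamma^{1/2}\|\Delta\varphi_s\|_{\Gamma_C}\big(\gamma^{1/2}\|\eta\|_{\Gamma_C}+\gamma^{-1/2}\|\sigma_n(\eta)\|_{\Gamma_C}\big).
\]
Young's inequality then absorbs \(\|\nabla e_h\|^2\) and \(\gamma\|\Delta\varphi_s\|^2\) into the left side, leaving
\[
\|e_h\|^2_{H^1}+\gamma\|\Delta\varphi_s\|^2_{\Gamma_C}\lesssim\|\eta\|^2_{H^1}+\gamma\|\eta\|^2_{\Gamma_C}+\gamma^{-1}\|\sigma_n(\eta)\|^2_{\Gamma_C}.
\]
All constants depend only on \(c_P\), \(C_I\) and \(\gamma_0\). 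They are independent of \(s\) because only \(0\le\varphi'_{+,s}\le1\) entered. The boundary norms of \(\eta\) are finite under \eqref{eq:fixed-parameter-central-path-regularity}, since \(\sigma_n(u_\mu)\in L^2(\Gamma_C)\) and \(\sigma_n(v)\) is taken elementwise.

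Finally, the triangle inequality \(\|u_\mu-u_h\|_{H^1}\leq\|\eta\|_{H^1}+\|e_h\|_{H^1}\) and taking the infimum over \(v\in V_h\) yield the stated estimate. The only genuinely delicate point is the term \(\gamma^{-1}(\sigma_n(\eta),\sigma_n(e_h))\). It is the reason the right-hand side must contain the \(\gamma^{-1}\|\sigma_n(u_\mu-v)\|^2\) term, and the reason \(\gamma_0\) must be large (\(\gamma_0>C_I\)) so that \(a_h\) is coercive on \(V_h\).
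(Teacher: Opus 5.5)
Your argument is correct, but it places the coercivity differently from the paper. The paper starts from the full error, \(c_P\|e\|^2_{H^1(\Omega)}\leq a(e,e)=a(e,\eta)+a(e,e_h)\), and rewrites \(a(e,e_h)=a_h(e,e_h)+\gamma^{-1}(\sigma_n(e),\sigma_n(e_h))_{\Gamma_C}\). The flux term then contains \(+\gamma^{-1}\|\sigma_n(e_h)\|^2_{\Gamma_C}\) with the unfavorable sign. The paper bounds it with the inverse inequality and \(\|\nabla e_h\|^2_\Omega\leq2\|\nabla e\|^2_\Omega+2\|\nabla\eta\|^2_\Omega\), and absorbs it into \(a(e,e)=\|\nabla e\|^2_\Omega\). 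With the constants used there this requires \(3C_I/\gamma_0<1\), hence ``\(\gamma_0\) sufficiently large''.

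You instead apply coercivity to the discrete component \(e_h\in V_h\), where Lemma~\ref{lem:coercive} controls \(a_h(e_h,e_h)\) directly. The term \(-\gamma^{-1}\|\sigma_n(e_h)\|^2_{\Gamma_C}\) is then already accounted for, and only the cross term \(\gamma^{-1}(\sigma_n(\eta),\sigma_n(e_h))_{\Gamma_C}\) needs the inverse inequality. Your route therefore gives the estimate for every \(\gamma_0>C_I\), the same threshold as for solvability and the Newton matrix, with constants that blow up as \(\gamma_0\downarrow C_I\). The cost is a final triangle inequality. The paper's route bounds \(e\) in one pass but needs a margin in \(\gamma_0\). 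Both rely on the same consistency, the same monotonicity bound \(0\leq\varphi'_{+,s}\leq1\), and the same regularity \(\sigma_n(u_\mu)\in L^2(\Gamma_C)\).

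One cosmetic slip: your first displayed identity is wrong as written. After the \(\pm\gamma(\Delta\varphi_s,DP(e))_{\Gamma_C}\) terms cancel, it retains a spurious \(-\gamma(\Delta\varphi_s,DP(\eta))_{\Gamma_C}\), whereas testing the error equation with \(e_h\) gives only \(-a_h(\eta,e_h)\) on the right. Delete it; the identity you derive ``more directly'' is correct and is the only one you use.
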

\begin{proof}
Using \eqref{eq:poincare-inequality} and splitting the error,
\begin{align}
\|e\|^2_{H^1(\Omega)} \lesssim a(e,e) = a(e,\eta) + a(e,e_h)
\end{align}
For the second term, using $a(e,e_h) = a_h(e,e_h) +
\gamma^{-1}(\sigma_n(e),\sigma_n(e_h))_{\Gamma_C}$, the error equation
with $w = e_h$, the splitting $DP(e_h) = DP(e) - DP(\eta)$, and the
monotonicity estimate,
\begin{align}
a(e,e_h) &= \gamma^{-1}(\sigma_n(e),\sigma_n(e_h))_{\Gamma_C}
- \gamma(\Delta\varphi_s, DP(e))_{\Gamma_C}
+ \gamma(\Delta\varphi_s, DP(\eta))_{\Gamma_C}
\nonumber \\
&\leq \gamma^{-1}(\sigma_n(e),\sigma_n(e_h))_{\Gamma_C}
- \gamma\|\Delta\varphi_s\|^2_{\Gamma_C}
+ \gamma(\Delta\varphi_s, DP(\eta))_{\Gamma_C}
\end{align}
and thus
\begin{align}
\|e\|^2_{H^1(\Omega)} + \gamma\|\Delta\varphi_s\|^2_{\Gamma_C}
\lesssim
\underbrace{a(e,\eta)}_{I}
+ \underbrace{\gamma^{-1}(\sigma_n(e),\sigma_n(e_h))_{\Gamma_C}}_{II}
+ \underbrace{\gamma(\Delta\varphi_s, DP(\eta))_{\Gamma_C}}_{III}
\end{align}
\emph{Term \(I\).}
By Cauchy--Schwarz and Young's inequality,
\begin{align}
I \leq \|e\|_{H^1(\Omega)} \|\eta\|_{H^1(\Omega)}
\leq \delta \|e\|^2_{H^1(\Omega)} + \frac{1}{4\delta}\|\eta\|^2_{H^1(\Omega)}
\end{align}
\emph{Term \(II\).}
Splitting $\sigma_n(e) = \sigma_n(\eta) + \sigma_n(e_h)$ and
using the inverse inequality together with $\|\nabla e_h\|^2_\Omega \leq
2\|\nabla e\|^2_\Omega + 2\|\nabla \eta\|^2_\Omega$,
\begin{align}
II &= \gamma^{-1}(\sigma_n(\eta),\sigma_n(e_h))_{\Gamma_C}
+ \gamma^{-1}\|\sigma_n(e_h)\|^2_{\Gamma_C}
\nonumber \\
&\leq \frac12 \gamma^{-1}\|\sigma_n(\eta)\|^2_{\Gamma_C}
+ \frac32 \gamma^{-1}\|\sigma_n(e_h)\|^2_{\Gamma_C}
\nonumber \\
&\leq \frac12 \gamma^{-1}\|\sigma_n(\eta)\|^2_{\Gamma_C}
+ \frac{3 C_I}{\gamma_0}\Big(\|\nabla e\|^2_{\Omega} + \|\nabla \eta\|^2_{\Omega}\Big)
\end{align}
\emph{Term \(III\).}
By Young's inequality and $\|DP(\eta)\|^2_{\Gamma_C}
\leq 2\|\eta\|^2_{\Gamma_C} +
2\gamma^{-2}\|\sigma_n(\eta)\|^2_{\Gamma_C}$,
\begin{align}
III \leq \frac{\gamma}{4}\|\Delta\varphi_s\|^2_{\Gamma_C}
+ \gamma\|DP(\eta)\|^2_{\Gamma_C}
\leq \frac{\gamma}{4}\|\Delta\varphi_s\|^2_{\Gamma_C}
+ 2\gamma \|\eta\|^2_{\Gamma_C}
+ 2\gamma^{-1}\|\sigma_n(\eta)\|^2_{\Gamma_C}
\end{align}
Collecting the estimates and taking $\delta$ sufficiently small and $\gamma_0$
sufficiently large, the terms $\delta \|e\|^2_{H^1(\Omega)}$,
$\gamma_0^{-1}\|\nabla e\|^2_\Omega$, and
$\tfrac{\gamma}{4}\|\Delta\varphi_s\|^2_{\Gamma_C}$ may be absorbed
into the left-hand side, and the theorem follows by taking the infimum over
$v \in V_h$.
\end{proof}

\begin{lem}[Fractional boundary-flux approximation]
\label{lem:fractional-boundary-flux-approximation}
Let \(3/2<r\leq k+1\), and let \(\pi_h\) satisfy the local approximation
property \eqref{eq:local-interpolation-estimate}. Then
\begin{equation}
\|\sigma_n(v-\pi_hv)\|_{\Gamma_C}
\lesssim h^{r-3/2}\|v\|_{H^r(\Omega)}
\quad \forall v\in H^r(\Omega) \label{eq:fractional-boundary-flux-approximation}
\end{equation}
\end{lem}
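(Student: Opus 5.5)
We prove the estimate element by element on the contact faces and then sum. Fix a contact face \(F\subset\Gamma_C\), and let \(T\in\mcT_h\) be the unique element adjacent to it. Set \(\eta=v-\pi_hv\), so that \(\sigma_n(\eta)|_F=\nabla\eta|_T\cdot\mathbf n\). Since \(|\mathbf n|=1\), the task reduces to bounding \(\|\nabla\eta\|_{L^2(F)}\) by a local trace inequality applied to \(\nabla\eta\in H^{r-1}(T)\), where \(r-1>1/2\). Such traces exist only because \(r>3/2\); this is exactly where the hypothesis enters.

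\emph{Scaled trace inequality.} Map \(T\) to the reference element \(\hat T\) and use the trace theorem \(H^{t}(\hat T)\to L^2(\hat F)\) with \(t=r-1\in(1/2,k]\). Scaling back gives
\begin{equation*}
\|w\|_{L^2(F)}^2\lesssim h^{-1}\|w\|_{L^2(T)}^2+h^{2t-1}|w|_{H^{t}(T)}^2 .
\end{equation*}
For fractional \(t\) the seminorm is the Sobolev--Slobodeckij seminorm. Its scaling \(|\hat w|_{H^t(\hat T)}^2= h^{2t-d}|w|_{H^t(T)}^2\) holds for integer and fractional orders alike. For \(t>1\) noninteger we apply the inequality with the full broken seminorm chain, or simply with \(\|w\|_{H^t(T)}\)-type terms, each weighted by its natural power of \(h\). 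Shape regularity makes the constant uniform in \(T\).

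\emph{Insert the interpolation estimate.} Apply the trace bound componentwise to \(w=\nabla\eta\), and use \eqref{eq:local-interpolation-estimate} with \(m=1\) and with \(m=r\), the latter read as a stability bound \(\|\eta\|_{H^r(T)}\lesssim\|v\|_{H^r(N_h(T))}\). This gives
\begin{equation*}
\|\sigma_n(\eta)\|_{L^2(F)}^2\lesssim h^{-1}h^{2(r-1)}\|v\|^2_{H^r(N_h(T))}+h^{2r-3}\|v\|_{H^r(N_h(T))}^2\lesssim h^{2r-3}\|v\|^2_{H^r(N_h(T))}.
\end{equation*}
Intermediate-order terms, when \(r-1>1\), are handled in the same way using \eqref{eq:local-interpolation-estimate} with intermediate \(m\).

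\emph{Summation.} Sum over the contact faces. The patches \(N_h(T)\) have bounded overlap by shape regularity, and for the Slobodeckij seminorm the localized double integrals over overlapping patches sum to at most a constant times the global double integral. Together these give
\begin{equation*}
\sum_F\|v\|^2_{H^r(N_h(T))}\lesssim\|v\|^2_{H^r(\Omega)},
\end{equation*}
and the square root yields \eqref{eq:fractional-boundary-flux-approximation}.

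The main obstacle is the fractional-order bookkeeping. It appears in two places: the scaled trace inequality for \(H^{r-1}(T)\) with \(r-1\) noninteger (in particular \(r-1\in(1/2,1)\)), and the fact that \(\|\cdot\|_{H^r(\cdot)}\) is not additive over subdomains. The first we settle by reference-element scaling of the Slobodeckij seminorm. The second we settle with the inequality \(\sum_T|v|^2_{H^r(N_h(T))}\lesssim|v|^2_{H^r(\Omega)}\), which follows from the bounded overlap of the sets \(N_h(T)\times N_h(T)\subset\Omega\times\Omega\).
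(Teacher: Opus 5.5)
Your proposal is correct and follows the paper's proof essentially step by step. Both arguments apply the scaled fractional trace inequality to $\nabla(v-\pi_h v)$ with $t=r-1$, use the interpolation estimate with $m=1$ and $m=r$, and sum over contact faces using the bounded overlap of the patches $N_h(T)$. The differences are minor: for $r>2$ the paper avoids higher fractional traces by using the standard element trace inequality with $m=1,2$, and your superadditivity argument for the Slobodeckij seminorm over $N_h(T)\times N_h(T)$ spells out a summation step the paper leaves implicit.
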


\begin{proof}
Let \(F=\partial T\cap\Gamma_C\) be a contact face and set
\(\eta=v-\pi_hv\). For \(1/2<t\leq1\), the scaled fractional trace
inequality \cite[Lemma~7.2]{ErGu17} gives
\begin{equation}
\|w\|_F
\lesssim h^{-1/2}\|w\|_T
+h^{t-1/2}\|w\|_{H^t(T)}
\quad \forall w\in H^t(T) \label{eq:scaled-fractional-trace-inequality}
\end{equation}
When \(3/2<r\leq2\), take \(w=\nabla\eta\) and \(t=r-1\), and then use
\eqref{eq:local-interpolation-estimate} with \(m=1\) and \(m=r\), to obtain
\begin{align}
\|\sigma_n(\eta)\|_F
&\lesssim h^{-1/2}\|\nabla\eta\|_T
+h^{r-3/2}\|\nabla\eta\|_{H^{r-1}(T)}
\label{eq:fractional-boundary-flux-local}\\
&\lesssim h^{r-3/2}\|v\|_{H^r(N_h(T))}
\label{eq:fractional-boundary-flux-local-conclusion}
\end{align}
For \(r>2\), the same local conclusion follows from the standard
element trace inequality applied to \(\nabla\eta\), together with
\eqref{eq:local-interpolation-estimate} for \(m=1\) and \(m=2\).
Squaring, summing over the contact faces, and using the uniformly bounded
overlap of the patches \(N_h(T)\) proves
\eqref{eq:fractional-boundary-flux-approximation}.
\end{proof}

\begin{cor}[Regularity-dependent error estimate]
\label{cor:regularity-dependent-error-estimate}
Let \(3/2<r\leq k+1\), assume that
\(g\in H^{r-1/2}(\Gamma_C)\), and suppose that
\begin{equation}
\sup_{0<\mu\leq\mu_0}\|u_\mu\|_{H^r(\Omega)}\leq C_r
\label{eq:uniform-central-path-regularity}
\end{equation}
With \(\gamma=\gamma_0/h\) and constants depending on \(\gamma_0\), it holds
\begin{equation}
\|u_\mu-u_h\|_{H^1(\Omega)}
+\gamma^{1/2}\|\Delta\varphi_s\|_{\Gamma_C}
\lesssim h^{r-1}\|u_\mu\|_{H^r(\Omega)}
\label{eq:central-path-discretization-error}
\end{equation}
Combining this estimate with Lemma~\ref{lem:central-path-estimate} gives
\begin{equation}
\|u-u_h\|_{H^1(\Omega)}
\lesssim h^{r-1}\|u_\mu\|_{H^r(\Omega)}+\mu^{1/2}
\quad \mu=\gamma s=\gamma_0s/h
\label{eq:regularity-dependent-total-error}
\end{equation}
Consequently, the sufficient balance
\begin{equation}
s\lesssim h^{2r-1}
\label{eq:regularity-dependent-smoothing-balance}
\end{equation}
or, equivalently, \(\mu\lesssim h^{2r-2}\), retains the rate
\(h^{r-1}\). In particular, if \(r=k+1\), the sufficient full-regularity
balance
\begin{equation}
s\lesssim h^{2k+1}
\label{eq:full-regularity-smoothing-balance}
\end{equation}
retains the full polynomial-order rate \(h^k\).
\end{cor}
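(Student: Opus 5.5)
The plan is to insert \(v=\pi_hu_\mu\) into the quasi-optimality bound of Theorem~\ref{thm:central-path-quasi-optimality} and bound the three interpolation terms by \(h^{2r-2}\|u_\mu\|_{H^r(\Omega)}^2\). The theorem's hypothesis \eqref{eq:fixed-parameter-central-path-regularity} holds for each fixed \(\mu\le\mu_0\), because \eqref{eq:uniform-central-path-regularity} and \(g\in H^{r-1/2}(\Gamma_C)\) are assumed. Its constants do not depend on \(s\) or \(\mu\), so the resulting bound is uniform in the regularization.

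\emph{Energy term.} Set \(\eta=u_\mu-\pi_hu_\mu\). Summing \eqref{eq:local-interpolation-estimate} with \(m=1\) over the elements, and using the bounded overlap of the patches \(N_h(T)\), gives \(\|\eta\|_{H^1(\Omega)}\lesssim h^{r-1}\|u_\mu\|_{H^r(\Omega)}\).

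\emph{Trace term.} On each element \(T\) with a contact face \(F\), apply the standard scaled trace inequality \(\|\eta\|_F^2\lesssim h^{-1}\|\eta\|_T^2+h\|\nabla\eta\|_T^2\). Then use \eqref{eq:local-interpolation-estimate} with \(m=0\) and \(m=1\). This gives \(\gamma\|\eta\|_{\Gamma_C}^2=\gamma_0h^{-1}\|\eta\|^2_{\Gamma_C}\lesssim h^{2r-2}\|u_\mu\|^2_{H^r(\Omega)}\).

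\emph{Flux term.} Lemma~\ref{lem:fractional-boundary-flux-approximation} gives
\[
\gamma^{-1}\|\sigma_n(\eta)\|^2_{\Gamma_C}\lesssim h\cdot h^{2r-3}\|u_\mu\|^2_{H^r(\Omega)}=h^{2r-2}\|u_\mu\|^2_{H^r(\Omega)}.
\]
Adding the three bounds and taking square roots proves \eqref{eq:central-path-discretization-error}, with constants depending on \(\gamma_0\).

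For \eqref{eq:regularity-dependent-total-error}, use the triangle inequality
\[
\|u-u_h\|_{H^1(\Omega)}\le\|u-u_\mu\|_{H^1(\Omega)}+\|u_\mu-u_h\|_{H^1(\Omega)}.
\]
Lemma~\ref{lem:central-path-estimate} bounds the first term by \((c_P^{-1}|\Gamma_C|\mu)^{1/2}\), and \eqref{eq:central-path-discretization-error} bounds the second, with \(\mu=\gamma s=\gamma_0s/h\). The smoothing balance then follows: \(\mu^{1/2}\lesssim h^{r-1}\) is equivalent to \(\mu\lesssim h^{2r-2}\), which is \(s\lesssim h^{2r-1}\). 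Taking \(r=k+1\) gives \eqref{eq:full-regularity-smoothing-balance}. One small point needs checking here. For \(\mu\le\mu_0\) to hold for the chosen pairs \((h,s)\), we need \(\gamma_0 s/h\le\mu_0\). This is automatic under the balance once \(h\le h_0\) is small, or after adjusting \(\mu_0\). Also, \(\|u_\mu\|_{H^r(\Omega)}\le C_r\) allows the right-hand side to be written uniformly if desired.

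The only step that is not routine is the flux term in the fractional range \(3/2<r\le2\), where the ordinary element trace inequality for \(\nabla\eta\) is unavailable. That difficulty has already been handled in Lemma~\ref{lem:fractional-boundary-flux-approximation}. The main remaining care is bookkeeping: the \(h\)-powers of \(\gamma\) and \(\gamma^{-1}\) must exactly cancel the trace losses, so that all three terms scale as \(h^{2r-2}\).
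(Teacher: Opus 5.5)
Your proposal is correct and follows the paper's proof essentially step by step. You insert \(v=\pi_hu_\mu\) into Theorem~\ref{thm:central-path-quasi-optimality}, bound the energy and trace terms by interpolation plus the element trace inequality and the flux term by Lemma~\ref{lem:fractional-boundary-flux-approximation}, then combine with Lemma~\ref{lem:central-path-estimate} via \(\mu=\gamma_0s/h\). Your check that \(\gamma_0s/h\le\mu_0\) is needed for the uniform bound to apply is a detail the paper leaves implicit, and it is harmless under the stated balance for small \(h\).
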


\begin{proof}
Let \(v=\pi_hu_\mu\). The interpolation and elementwise trace estimates give
\begin{align}
\|\eta\|_{H^1(\Omega)}
&\lesssim h^{r-1}\|u_\mu\|_{H^r(\Omega)}
\label{eq:regularity-dependent-interpolation-energy}\\
\|\eta\|_{\Gamma_C}
&\lesssim h^{r-1/2}\|u_\mu\|_{H^r(\Omega)}
\label{eq:regularity-dependent-interpolation-trace}\\
\|\sigma_n(\eta)\|_{\Gamma_C}
&\lesssim h^{r-3/2}\|u_\mu\|_{H^r(\Omega)}
\label{eq:regularity-dependent-interpolation-flux}
\end{align}
Here the first two bounds follow from
\eqref{eq:local-interpolation-estimate} and the standard element trace
inequality, while the third is
Lemma~\ref{lem:fractional-boundary-flux-approximation}. In particular, its
proof remains valid for the fractional range \(3/2<r<2\).
Inserting these bounds in
Theorem~\ref{thm:central-path-quasi-optimality} proves
\eqref{eq:central-path-discretization-error}; the remaining statements follow
from \eqref{eq:central-path-estimate} and \(\mu=\gamma_0s/h\).
\end{proof}


\subsection{Interpretation and Consequences}
\label{subsec:error-estimate-interpretation}

The balance \(\mu\lesssim h^{2r-2}\) states that the barrier parameter is of
the order of the square of the available energy-norm approximation error, in
accordance with its interpretation as a complementarity gap.

\begin{rem}[Regularity of the central path]
The uniform bound \eqref{eq:uniform-central-path-regularity} is an assumption
on the entire central-path family, not merely on each fixed \(u_\mu\). Since
\(u_\mu\to u\) as \(\mu\to0\), a uniform \(H^{k+1}(\Omega)\) bound requires
the limit to have the same regularity. A generic Signorini solution belongs
only to \(H^{5/2-\epsilon}(\Omega)\), so the basic Sobolev argument guarantees
rates below \(3/2\), rather than an exact generic rate of \(3/2\), for
quadratic and higher-order elements. Taking
\(r=5/2-\epsilon\) in
Corollary~\ref{cor:regularity-dependent-error-estimate} gives the balance
\(s\lesssim h^{4-2\epsilon}\). The more restrictive choice
\(s\lesssim h^{2k+1}\) remains sufficient whenever \(r\leq k+1\), but need
not be the least restrictive sufficient choice.

Proposition~\ref{prop:uniform-local-central-path-regularity} proves that the
central path is uniformly \(H^2\) on neighborhoods that may contain
free-boundary points of the limiting Signorini solution in the relative
interior of a planar contact face. Consequently, the reciprocal logarithmic
law itself does not obstruct the regularity needed for the local \(P_1\)
estimate. The global hypothesis in
\eqref{eq:uniform-central-path-regularity} remains necessary because the
Dirichlet--contact interface, corners, and polyhedral edges are excluded from
that proposition.

The \(P_2\) energy-error data for Example B in
Section~\ref{sec:numerical-examples} show a finest-mesh rate near \(3/2\).
Such a rate may follow from additional weighted or Besov regularity, but it is
not asserted by the present Sobolev estimate. Formally balancing
\(\rho(u_\mu)\sigma_n(u_\mu)=\mu\) with the generic behaviours
\(\sigma_n\sim d^{1/2}\) and \(g-u\sim d^{3/2}\), where \(d\) is the distance
from the free boundary of the limiting Signorini solution, suggests a barrier
layer of width
\(O(\mu^{1/2})\). A rigorous analysis of this layer is left for future work.
\end{rem}

\begin{rem}[Dimensional scaling]
The condition
$s \lesssim h^{2k+1}$ is stated for nondimensional quantities. Since $P(v)$
has the dimension of the solution, $s$ has the dimension of the solution
squared, whereas $\gamma = \gamma_0/h$ is large. Let $L$ denote a reference
length for $\Omega$ and $U$ a reference magnitude of $u$. Writing the central
path estimate as $\|u - u_\mu\|_{H^1(\Omega)} \lesssim (L\mu)^{1/2}$ and
requiring this to be dominated by the discretization error
$h^k \|u\|_{H^{k+1}(\Omega)} \sim U (h/L)^k$ gives
\begin{align}
\mu \lesssim \frac{U^2}{L}\Big(\frac{h}{L}\Big)^{2k}
\qquad \text{or equivalently} \qquad
\boxed{s \lesssim \frac{U^2}{\gamma_0}\Big(\frac{h}{L}\Big)^{2k+1}}
\end{align}
which reduces to the stated condition for $L = U = \gamma_0 = 1$. The factor
$\gamma_0^{-1}U^2L^{-(2k+1)}$ is easily overlooked and may be far from unity,
in particular since $\gamma_0$ is chosen large to secure coercivity. In the
computations of Section~\ref{sec:numerical-examples}, \(L\) is of order unity,
\(\gamma_0=10\) for \(P_1\), and \(\gamma_0=20\) for \(P_2\). The choice
$s = h^{2k+1}/4$ lies close to this bound, consistent with the observation
made there that at this balance the regularization contributes at the same
order as the discretization error.
\end{rem}

\begin{cor}[Discrete contact diagnostics]
\label{cor:discrete-contact-diagnostics}
Under the assumptions of
Corollary~\ref{cor:regularity-dependent-error-estimate}, define
\(\lambda_h=-\gamma\varphi_{+,s}(P(u_h))\). Then
\begin{align}
\gamma^{-1/2}\|\sigma_n(u_h)-\lambda_h\|_{\Gamma_C}
&\lesssim h^{r-1}\|u_\mu\|_{H^r(\Omega)}
\label{eq:stress-multiplier-discrepancy}\\
\|(u_h-g)_+\|_{\Gamma_C}
&\lesssim h^{r-1/2}\|u_\mu\|_{H^r(\Omega)}
\label{eq:discrete-penetration-rate}\\
\|(g-u_h)(-\lambda_h)-\mu\|_{L^1(\Gamma_C)}
&\lesssim h^{r-1/2}\|u_\mu\|_{H^r(\Omega)}^2
\label{eq:discrete-perturbed-complementarity-residual}
\end{align}
Consequently,
\begin{equation}
\|(g-u_h)(-\lambda_h)\|_{L^1(\Gamma_C)}
\lesssim
h^{r-1/2}\|u_\mu\|_{H^r(\Omega)}^2
+\mu\lvert\Gamma_C\rvert
\label{eq:discrete-complementarity-residual}
\end{equation}
\end{cor}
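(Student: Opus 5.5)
The plan is to express all three quantities through the two error terms already controlled by Corollary~\ref{cor:regularity-dependent-error-estimate}, namely \(\|e\|_{H^1(\Omega)}\) with \(e=u_\mu-u_h\) and \(\gamma^{1/2}\|\Delta\varphi_s\|_{\Gamma_C}\). The tools are the central-path relation \eqref{eq:central-path-nitsche-relation} and the algebraic identities \(w=\varphi_{+,s}(w)-\varphi_{+,s}(-w)\) and \(\varphi_{+,s}(w)\varphi_{+,s}(-w)=s\).

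\emph{Step 1: the discrepancy \eqref{eq:stress-multiplier-discrepancy}.} The central path satisfies \(\sigma_n(u_\mu)+\gamma\varphi_{+,s}(P(u_\mu))=0\). Subtracting this from \(\sigma_n(u_h)-\lambda_h=\sigma_n(u_h)+\gamma\varphi_{+,s}(P(u_h))\) gives
\begin{equation*}
\sigma_n(u_h)-\lambda_h=-\sigma_n(e)-\gamma\Delta\varphi_s .
\end{equation*}
The term \(\gamma^{1/2}\|\Delta\varphi_s\|_{\Gamma_C}\) is bounded directly by \eqref{eq:central-path-discretization-error}. For the flux term, take \(v=\pi_hu_\mu\) and split \(\sigma_n(e)=\sigma_n(\eta)+\sigma_n(e_h)\).
\begin{itemize}
\item Lemma~\ref{lem:fractional-boundary-flux-approximation} and \(\gamma^{-1/2}\sim h^{1/2}\) give \(\gamma^{-1/2}\|\sigma_n(\eta)\|_{\Gamma_C}\lesssim h^{r-1}\|u_\mu\|_{H^r(\Omega)}\).
\item The inverse inequality \eqref{eq:inverse} gives \(\gamma^{-1/2}\|\sigma_n(e_h)\|_{\Gamma_C}\lesssim\|\nabla e_h\|_\Omega\leq\|\nabla e\|_\Omega+\|\nabla\eta\|_\Omega\).
\end{itemize}
Both parts are \(O(h^{r-1})\) by \eqref{eq:central-path-discretization-error} and \eqref{eq:regularity-dependent-interpolation-energy}.

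\emph{Step 2: the penetration \eqref{eq:discrete-penetration-rate}.} I avoid the trace of \(e\), which only yields \(h^{r-1}\). Instead I use the pointwise bound \(P(u_h)=\varphi_{+,s}(P(u_h))-\varphi_{+,s}(-P(u_h))\leq\varphi_{+,s}(P(u_h))=-\lambda_h/\gamma\). Then
\begin{equation*}
u_h-g=P(u_h)+\gamma^{-1}\sigma_n(u_h)\leq\gamma^{-1}(\sigma_n(u_h)-\lambda_h),
\end{equation*}
so \((u_h-g)_+\leq\gamma^{-1}\lvert\sigma_n(u_h)-\lambda_h\rvert\) pointwise. Step 1 then gives \(\gamma^{-1/2}\cdot h^{r-1}\sim h^{r-1/2}\).

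\emph{Step 3: the residual \eqref{eq:discrete-perturbed-complementarity-residual}, which is the main bookkeeping step.} Write
\begin{equation*}
g-u_h=-P(u_h)-\gamma^{-1}\sigma_n(u_h)=\varphi_{+,s}(-P(u_h))+\gamma^{-1}(\lambda_h-\sigma_n(u_h)).
\end{equation*}
Multiply by \(-\lambda_h=\gamma\varphi_{+,s}(P(u_h))\) and use \(\gamma\varphi_{+,s}(P)\varphi_{+,s}(-P)=\gamma s=\mu\). This gives the exact identity
\begin{equation*}
(g-u_h)(-\lambda_h)-\mu=\gamma^{-1}\lambda_h(\sigma_n(u_h)-\lambda_h).
\end{equation*}
Cauchy--Schwarz then bounds the \(L^1\) norm by \(\gamma^{-1/2}\|\lambda_h\|_{\Gamma_C}\cdot\gamma^{-1/2}\|\sigma_n(u_h)-\lambda_h\|_{\Gamma_C}\), and the second factor is handled by Step 1.

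The remaining point is a bound on \(\gamma^{-1/2}\|\lambda_h\|_{\Gamma_C}\). Since \(\lambda_h=\sigma_n(u_\mu)+\gamma\Delta\varphi_s\), the trace estimate for \(\sigma_n(u_\mu)\) with \(r>3/2\) and \eqref{eq:central-path-discretization-error} give
\begin{equation*}
\gamma^{-1/2}\|\lambda_h\|_{\Gamma_C}\lesssim h^{1/2}\|u_\mu\|_{H^r(\Omega)}+h^{r-1}\|u_\mu\|_{H^r(\Omega)}.
\end{equation*}
The product of the two factors is therefore \(\lesssim(h^{r-1/2}+h^{2r-2})\|u_\mu\|_{H^r(\Omega)}^2\). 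This is \(\lesssim h^{r-1/2}\|u_\mu\|_{H^r(\Omega)}^2\) because \(r-1\geq1/2\).

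\emph{Step 4.} The final bound \eqref{eq:discrete-complementarity-residual} follows from the triangle inequality, using \(\|\mu\|_{L^1(\Gamma_C)}=\mu\lvert\Gamma_C\rvert\).
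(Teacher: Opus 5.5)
Your proposal is correct and follows the paper's proof essentially step by step. The matching steps are the decomposition \(\sigma_n(u_h)-\lambda_h=-\sigma_n(e)-\gamma\Delta\varphi_s\) with the interpolant splitting, the pointwise bound \((u_h-g)_+\leq\gamma^{-1}\lvert\sigma_n(u_h)-\lambda_h\rvert\) (which the paper phrases as positivity of the corrected gap \(\widehat q_h=\varphi_{+,s}(-P(u_h))\)), the exact identity \((g-u_h)(-\lambda_h)-\mu=\gamma^{-1}\lambda_h(\sigma_n(u_h)-\lambda_h)\), and Cauchy--Schwarz with \(\gamma^{-1/2}\|\lambda_h\|_{\Gamma_C}\lesssim h^{1/2}\|u_\mu\|_{H^r(\Omega)}\).
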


\begin{proof}
Let \(v=\pi_hu_\mu\) and use the splitting
\(e=u_\mu-u_h=\eta+e_h\) from the proof of
Theorem~\ref{thm:central-path-quasi-optimality}. The inverse inequality,
\eqref{eq:regularity-dependent-interpolation-flux}, and
\eqref{eq:central-path-discretization-error} give
\begin{equation}
\gamma^{-1/2}\|\sigma_n(e)\|_{\Gamma_C}
\lesssim h^{r-1}\|u_\mu\|_{H^r(\Omega)}
\label{eq:total-flux-error-bound}
\end{equation}
Combining this with
\(\sigma_n(u_\mu)-\lambda_h=-\gamma\Delta\varphi_s\) proves
\eqref{eq:stress-multiplier-discrepancy}. The pointwise identities
\begin{align}
\widehat q_h
&\coloneqq(g-u_h)+\gamma^{-1}(\sigma_n(u_h)-\lambda_h)
=\varphi_{+,s}(-P(u_h))>0
\label{eq:corrected-discrete-gap}\\
\widehat q_h(-\lambda_h)&=\mu
\label{eq:corrected-discrete-complementarity}
\end{align}
follow from
\(\varphi_{+,s}(w)\varphi_{+,s}(-w)=s\). The first identity and
the positivity of \(\widehat q_h\) imply
\begin{equation}
\|(u_h-g)_+\|_{\Gamma_C}
\leq\gamma^{-1}\|\sigma_n(u_h)-\lambda_h\|_{\Gamma_C}
\label{eq:discrete-feasibility-l2-bound}
\end{equation}
Together with \eqref{eq:stress-multiplier-discrepancy}, this proves
\eqref{eq:discrete-penetration-rate}. Moreover,
\begin{equation}
(g-u_h)(-\lambda_h)-\mu
=-\gamma^{-1}(\sigma_n(u_h)-\lambda_h)(-\lambda_h)
\label{eq:physical-gap-complementarity-identity}
\end{equation}
By \eqref{eq:central-path-flux}, the trace inequality, and
\eqref{eq:central-path-discretization-error},
\begin{equation}
\gamma^{-1/2}\|\lambda_h\|_{\Gamma_C}
\lesssim h^{1/2}\|u_\mu\|_{H^r(\Omega)}
\label{eq:scaled-discrete-pressure-bound}
\end{equation}
Applying Cauchy--Schwarz to
\eqref{eq:physical-gap-complementarity-identity} proves
\eqref{eq:discrete-perturbed-complementarity-residual}, and the triangle
inequality gives \eqref{eq:discrete-complementarity-residual}.
\end{proof}

The corrected gap \(\widehat q_h\), rather than the physical gap
\(g-u_h\), lies exactly on the discrete central path. The corollary quantifies
both the resulting penetration and the physical complementarity residual.

\begin{rem}[Comparison with a direct estimate]
\label{rem:direct-error-estimate}
Comparing \(u_h\) directly with \(u\), rather than with the central path, leads
to the error equation
\begin{align}
a_h(u - u_h, w)
+ \gamma\big(\varphi_{+,s}(P(u)) - \varphi_{+,s}(P(u_h)),
DP(w)\big)_{\Gamma_C}
= \gamma(\rho_s, DP(w))_{\Gamma_C}
\end{align}
with the consistency error
$\rho_s = \varphi_{+,s}(P(u)) - \varphi_{+,0}(P(u))$.
Estimating this term by Cauchy--Schwarz, using the global bound
$0 \leq \rho_s \leq s^{1/2}$, yields the contribution $\gamma^2 s$ and hence
the stronger condition $s \lesssim h^{2(k+1)}$. The gain of one power of \(h\)
in the corollary stems from exact consistency with the central path: no
consistency error enters the finite
element estimate, and the regularization is accounted for once, at the
continuous level, by Lemma~\ref{lem:central-path-estimate}. The resulting
\(\mu^{1/2}\) bound is sufficient and is sharp for the present argument. The
examples in Section~\ref{sec:numerical-examples} often converge faster, but a
further general relaxation of the condition on \(s\) does not follow from the
global bounds used here.
\end{rem}

\begin{rem}[Contact pressure]
\label{rem:contact-pressure}
Define the discrete contact pressure
\(\lambda_h=-\gamma\varphi_{+,s}(P(u_h))\). Since
\(\sigma_n(u_\mu)=-\gamma\varphi_{+,s}(P(u_\mu))\), the corollary gives
\begin{align}
\|\sigma_n(u_\mu) - \lambda_h\|_{\Gamma_C}
= \gamma \|\Delta\varphi_s\|_{\Gamma_C}
\lesssim \gamma^{1/2} h^{r-1} \|u_\mu\|_{H^r(\Omega)}
= \gamma_0^{1/2} h^{r-3/2} \|u_\mu\|_{H^r(\Omega)}
\end{align}
that is, the usual loss of half an order for the boundary flux, now measured
relative to the central-path pressure. Under full regularity \(r=k+1\), this
is the rate \(h^{k-1/2}\). This estimate is relative to
\(\sigma_n(u_\mu)\), not to the exact contact pressure \(\sigma_n(u)\).
Controlling the distance between those two pressures requires additional
assumptions on the contact zone and is not pursued here. In the smooth
computations of Section~\ref{sec:numerical-examples}, the observed error
relative to \(\sigma_n(u)\) decays like \(h^k\), so the half-order loss is not
seen there. For the singular example, the measured rates decrease under
refinement and remain preasymptotic; the present analysis does not assign an
asymptotic rate to the error relative to \(\sigma_n(u)\).
\end{rem}


\section{Numerical Examples}
\label{sec:numerical-examples}

The experiments examine the regularization error, Newton convergence,
approximation of smooth and singular exact solutions, contact diagnostics, and
robustness with respect to the Nitsche parameter and mesh structure.


\subsection{Baseline Problem and Regularization Error}
\label{subsec:baseline-regularization-error}

We consider the model problem on the unit square $\Omega = (0,1)^2$
with load $f = -1$ for $y < 1/2$ and $f = 1$ for $y \geq 1/2$, homogeneous
Dirichlet conditions on the boundary part $x = 0$, and contact conditions
with $g = 0$ on the remainder of the boundary. The domain is partitioned into
uniform triangular meshes with $h = 1/n$, $n = 8,\dots,256$ for $P_1$ and
$n = 8,\dots,128$ for $P_2$, and we use $\gamma_0 = 10$ for $P_1$ elements
and $\gamma_0 = 20$ for $P_2$ elements.
The nonlinear systems are solved by Newton's method with full steps, starting
from $u_{h,0} = 0$, until the Euclidean residual norm $r_n$ on the free
degrees of freedom satisfies
$r_n < \max(10^{-10},10^{-12}r_0)$.
For $P_1$ we use one-point volume and two-point edge quadrature; for $P_2$
we use a degree-five volume rule and four-point Gauss edge quadrature. The
smoothed positive part is evaluated by the cancellation-free formulas in
Section~\ref{sec:newton-method}. The $P_2$ solution is shown in Figure
\ref{fig:solution-p2}; the constraint is active along the upper part of the
boundary, where the load pushes the solution against the obstacle $g = 0$.

\begin{figure}[H]
\centering
\includegraphics[width=0.70\textwidth]{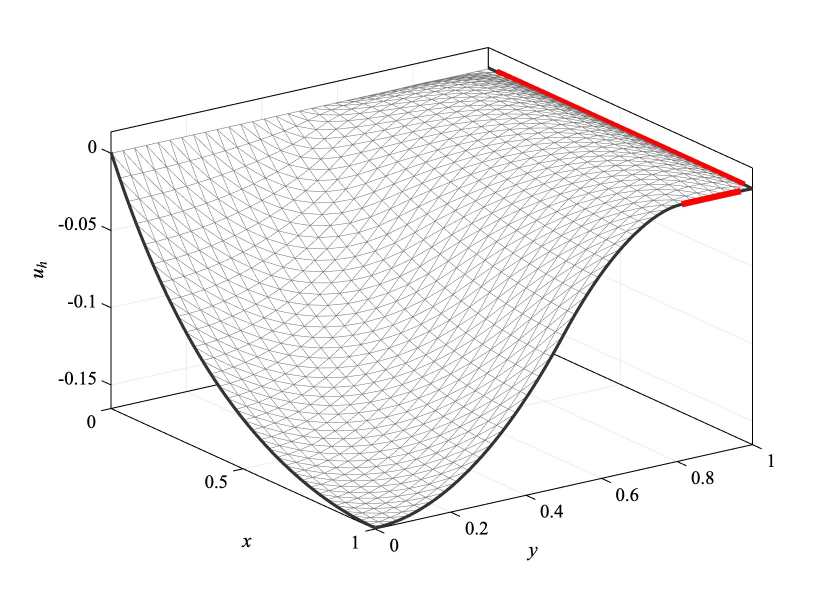}
\caption{Elevation of the $P_2$ solution with the mesh shown ($n = 32$,
$s = h^5/4$). The contact part of the boundary is drawn on top of the
surface: red where $P(u_h)=(u_h-g)-\sigma_n(u_h)/\gamma>0$ at the edge
midpoint, and dark gray where it is inactive. The Dirichlet boundary
$x=0$ is unmarked.}
\label{fig:solution-p2}
\end{figure}
\FloatBarrier

To isolate the effect of the regularization we compute, on each mesh,
both the regularized solution $u_{h,s}$ with $s = h^{\alpha}/4$ for a range
of exponents $\alpha$, and the unregularized solution $u_{h,0}$, obtained by
a primal-dual active set (semismooth Newton) method, and measure
$\lvert u_{h,s} - u_{h,0}\rvert_{H^1(\Omega)}$. Since the difference vanishes
on the Dirichlet boundary, this seminorm is equivalent to the full
$H^1(\Omega)$ norm. Both solutions are computed on the same mesh, so the
common discretization effects are reduced. The resulting difference is a
discrete diagnostic motivated by the continuous central-path distance. The
estimate in Section~\ref{sec:error-estimates} therefore motivates the reference
scale
\begin{align}
\mathcal E_{\mathrm{ref}}(h,s)
&\coloneqq (\gamma s)^{1/2}=O(h^{(\alpha-1)/2})
\label{eq:discrete-regularization-reference-scaling}
\end{align}
The results are shown in Figure \ref{fig:sweep-p1} for $P_1$ and in
Figure \ref{fig:sweep-p2} for $P_2$, together with reference lines of the
motivating slope $(\alpha-1)/2$. In every case the observed discrete
difference decays at least as fast as this reference scaling on the tested
meshes, with measured rates about half an order higher. In particular, at the
sufficient full-regularity balance \(\alpha=2k+1\) in
\eqref{eq:full-regularity-smoothing-balance}, the
observed rates on the finest meshes are $1.45$ for $P_1$ ($\alpha = 3$) and
$3.06$ for $P_2$ ($\alpha = 5$), above the target finite element energy order
$k$. Thus the regularization difference decays faster than the discretization
error in these experiments. In contrast, the choice $\alpha = 2$ for $P_1$
yields the observed rate $0.68 < 1$, so it would dominate an order-one
discretization error.

\begin{figure}[!htbp]
\centering
\begin{subfigure}[t]{0.49\textwidth}
\centering
\includegraphics[width=\textwidth]{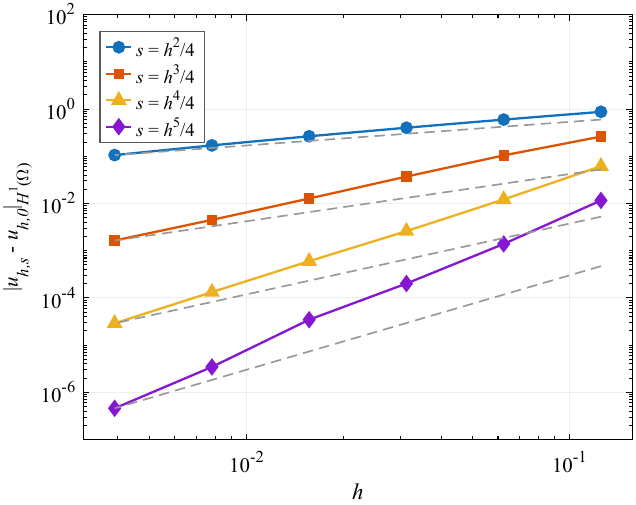}
\caption{$P_1$ elements, $\alpha=2,\dots,5$.}
\label{fig:sweep-p1}
\end{subfigure}
\hfill
\begin{subfigure}[t]{0.49\textwidth}
\centering
\includegraphics[width=\textwidth]{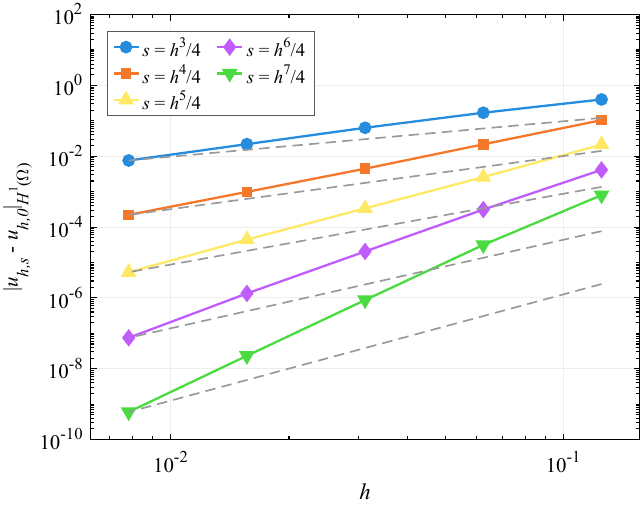}
\caption{$P_2$ elements, $\alpha=3,\dots,7$.}
\label{fig:sweep-p2}
\end{subfigure}
\caption{Regularization error
$\lvert u_{h,s}-u_{h,0}\rvert_{H^1(\Omega)}$ for $s=h^\alpha/4$
(solid, markers), together with reference lines of slope $(\alpha-1)/2$
(dashed), motivated by the continuous central-path estimate. The sufficient
full-regularity balances are $\alpha=3$ for $P_1$ and $\alpha=5$ for $P_2$.}
\label{fig:sweeps}
\end{figure}
\FloatBarrier


\subsection{Exact-Solution Test Problems}
\label{subsec:exact-solution-test-problems}

To assess the convergence rates predicted by
Corollary~\ref{cor:regularity-dependent-error-estimate}, we next consider two problems
with known exact solution on $\Omega = (-1,1)\times(0,1)$, with contact
conditions and $g = 0$ on the bottom edge
$\Gamma_C = (-1,1)\times\{0\}$, and Dirichlet conditions on the remaining
three sides with data taken from the exact solution. These computations use
degree-five volume and four-point edge quadrature for both element orders.
The initial iterate has the prescribed Dirichlet values and zero free degrees
of freedom, and the stopping criterion is
$r_n < \max(10^{-11},10^{-12}r_0)$.

\paragraph{Example A: Smooth Manufactured Solution.}
\label{par:numerical-example-a}
The first test uses the manufactured solution
\begin{align}
u_A = -x_+^3 + y\,(-x)_+^3, \qquad
f = -\Delta u_A =
\begin{cases}
6x, & x > 0
\\
6xy, & x < 0
\end{cases}
\label{eq:exact-solution-a}
\end{align}
On $\Gamma_C$ we have $u_A = -x_+^3 \leq 0$,
$\sigma_n(u_A) = -(-x)_+^3 \leq 0$ and $u_A \sigma_n(u_A) = 0$, so the
Kuhn--Tucker conditions hold exactly; since the problem is strictly convex,
$u_A$ is the solution. The contact set is $[-1,0]$ and the free boundary is
the point $x = 0$. Since $u_A \in C^{2,1}(\Omega) \subset H^3(\Omega)$ the
optimal orders are attainable for both $P_1$ and $P_2$, at the price of a
contact pressure that degenerates cubically at the free boundary.

\paragraph{Example B: Singular Signorini Profile.}
\label{par:numerical-example-b}
The second test uses the classical solution
\begin{align}
u_B = -r^{3/2} \cos(3\theta/2), \qquad f = 0
\label{eq:exact-solution-b}
\end{align}
in polar coordinates centred at the origin, $\theta \in [0,\pi]$. Here
$u_B = 0$ and $\sigma_n(u_B) = -\tfrac32 r^{1/2} < 0$ for $x<0$, while
$u_B = -|x|^{3/2} < 0$ and $\sigma_n(u_B) = 0$ for $x>0$, so the contact set
is again $[-1,0]$, but the contact pressure now exhibits the square root
behaviour generic for the Signorini problem. Consequently
$u_B \in H^{5/2-\epsilon}(\Omega)$ only, consistent with the optimal
$C^{1,1/2}$ regularity of the Signorini problem \cite{AtCa06}, and the
convergence rate is limited
to $\min(k,3/2)$. The computed solution is shown in Figure
\ref{fig:solution-B}; the active set determined by the method coincides with
the exact contact set $[-1,0]$, that is, the free boundary is located at
$x = 0$ to within one element.

\begin{figure}[!htbp]
\centering
\includegraphics[width=0.74\textwidth]{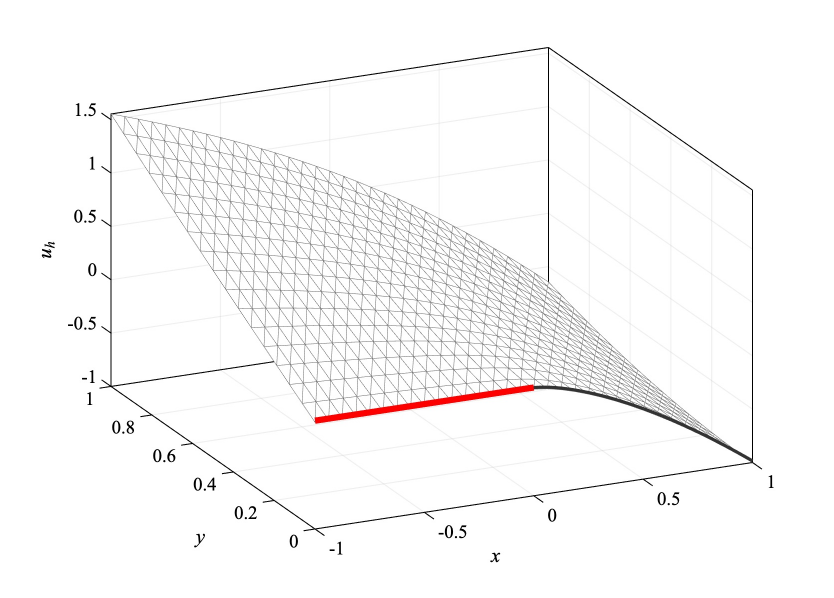}
\caption{Elevation of the $P_2$ solution for Example B ($n=16$,
$s=h^5/4$). Red marks the active contact boundary and dark gray the inactive
part. The solution is flat at the obstacle on $[-1,0]$ and detaches like
$-|x|^{3/2}$ beyond the free boundary at $x=0$.}
\label{fig:solution-B}
\end{figure}
\FloatBarrier


\subsection{Newton Performance and Error Convergence}
\label{subsec:newton-performance-error-convergence}

\paragraph{Newton Performance.}
Figure~\ref{fig:newton} summarizes the nonlinear iteration. In the
regularization study of Subsection~\ref{subsec:baseline-regularization-error},
the iteration count stays between \(7\) and \(15\) over six mesh refinements.
It is essentially independent of the mesh size and increases only mildly as
\(s\) approaches the nonsmooth limit. For comparison, the primal-dual active
set method for the unregularized problem \((s=0)\) required \(5\)--\(12\)
iterations on the same meshes. Here and below, an iteration means one
linearized update; evaluation of the initial residual is not counted.

The lower panels of Figure~\ref{fig:newton} show residual histories for
Example A. The observed order, computed from the Euclidean residual norms
\(r_n\) as
\begin{equation}
q_n = \frac{\log(r_{n+1}/r_n)}{\log(r_n/r_{n-1})}
\label{eq:observed-newton-order}
\end{equation}
is approximately \(1.9\)--\(2.0\) in the final resolved local steps of the
two full-balance runs, before the residual reaches round-off. This behavior is
consistent with quadratic local convergence. Undamped steps were used
throughout; the globalization discussed in Section~\ref{sec:newton-method}
was not needed.

\begin{figure}[!htbp]
\centering
\includegraphics[width=0.96\textwidth]{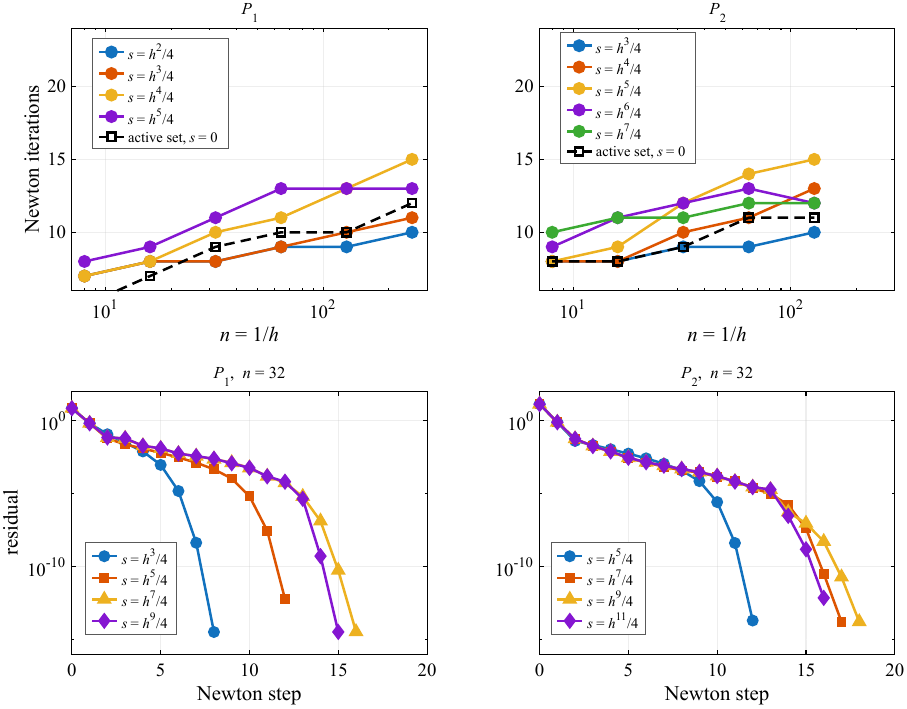}
\caption{Newton performance. Upper panels: iteration counts for the
regularized method with $s=h^\alpha/4$ and for the primal-dual active set
method at $s=0$. Counts refer to linearized updates and exclude the initial
residual evaluation. Lower panels: residual histories for Example A on the
mesh $n=32$, with full Newton steps. For the full-balance runs, the observed
local order is approximately $1.9$--$2.0$ before round-off dominates.}
\label{fig:newton}
\end{figure}
\FloatBarrier

\paragraph{Convergence against Exact Solutions.}
The errors for the sufficient full-regularity choice $s = h^{2k+1}/4$ are
reported in
Figure \ref{fig:exact-conv}.
In the energy norm the rates observed on the finest meshes are $1.07$ and
$1.04$ for $P_1$ in Examples A and B, approaching the expected order
$k = 1$ from above, and $2.29$ for $P_2$ in Example A, slightly above the
expected order $k = 2$. For $P_2$ in Example B the rate decreases from $2.43$
to $1.50$ as the mesh resolves the singularity. This endpoint rate is
consistent with the known approximation behaviour of this particular corner
singularity; the basic Sobolev estimate in Section~\ref{sec:error-estimates}
guarantees rates below \(3/2\). The number of Newton
iterations was between $7$ and $16$ throughout, growing by one or two per
refinement as $s$ decreases with the mesh.

\begin{figure}[!htbp]
\centering
\includegraphics[width=0.96\textwidth]{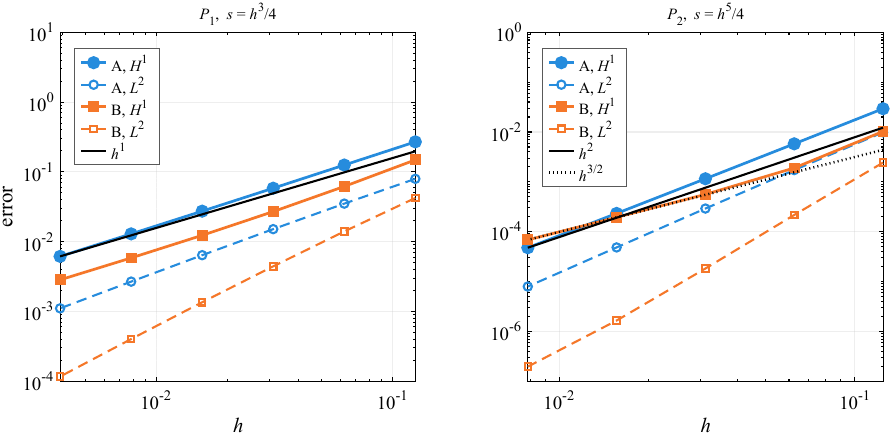}
\caption{Convergence against the exact solutions of Examples A and B in the
energy norm (solid) and $L^2(\Omega)$ norm (dashed), with
$s=h^{2k+1}/4$. For $P_2$, the energy error of Example B has a finest-mesh
rate near \(3/2\), consistent with the regularity of \(u_B\), rather than the
full order \(2\).}
\label{fig:exact-conv}
\end{figure}
\FloatBarrier

\paragraph{Effect of the Smoothing Parameter.}
Figure~\ref{fig:svar} clarifies the role of the full-regularity balance for
\(P_1\) elements in Example A. At
$s = h^{2k+1}/4$ the regularization contributes at the same order as the
discretization error, and indeed the energy norm error on the finest mesh
exceeds the value obtained for negligible regularization by about forty per
cent, but the rate is unaffected. Reducing $s$ further leaves the energy-norm
error essentially unchanged, indicating that the discretization error then
dominates. It does, however, improve the $L^2(\Omega)$ error, which is polluted
by regularization at $s = h^{2k+1}$; the rate $2$ is recovered for
$s \sim h^{2k+3}$. The analysis of Section~\ref{sec:error-estimates} concerns
the energy norm only and makes no claim about the $L^2(\Omega)$ error.

\begin{figure}[!htbp]
\centering
\includegraphics[width=0.96\textwidth]{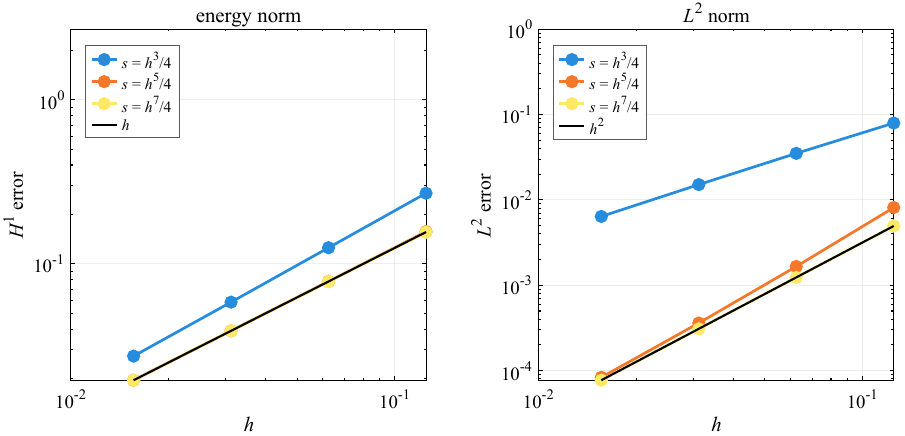}
\caption{Effect of the regularization parameter for $P_1$ elements in
Example A. The sufficient balance $s=h^3/4$ retains the expected energy rate
but pollutes the $L^2(\Omega)$ error. The energy curves for $s=h^5/4$ and
$s=h^7/4$ coincide, showing that the discretization error then dominates.}
\label{fig:svar}
\end{figure}
\FloatBarrier


\subsection{Contact and Parameter Diagnostics}
\label{subsec:contact-parameter-diagnostics}

\paragraph{Contact Pressure and Feasibility.}
The discrete contact pressure
\(\lambda_h=-\gamma\varphi_{+,s}(P(u_h))\) is compared with the exact pressure
\(\sigma_n(u)\) in Figure~\ref{fig:pressure-conv}, using the
\(L^2(\Gamma_C)\) norm and \(s=h^{2k+1}/4\). For Example A, the observed rates
are \(1.10\)--\(1.13\) for \(P_1\) and \(2.03\)--\(2.21\) for \(P_2\), namely
order \(h^k\). The estimate in Remark~\ref{rem:contact-pressure} instead
compares \(\lambda_h\) with the central-path pressure and gives
\(h^{k-1/2}\); the present experiment shows that this half-order loss is not
observed relative to the exact pressure. For Example B with \(P_2\), the rate
decreases from \(2.06\) to \(1.32\) as the mesh resolves the singularity.
Substituting \(u_B\in H^{5/2-\epsilon}(\Omega)\) in the Sobolev estimate of
Remark~\ref{rem:contact-pressure} gives \(h^{1-\epsilon}\), and that estimate
is relative to the central-path pressure rather than \(\sigma_n(u)\). The
values for Example B should therefore be regarded as preasymptotic; they do
not identify a limiting pressure rate.

\begin{figure}[!htbp]
\centering
\includegraphics[width=0.96\textwidth]{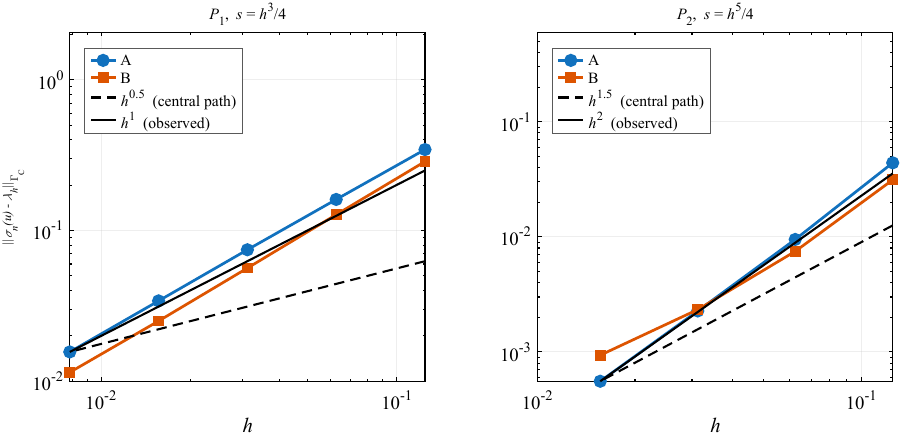}
\caption{Contact-pressure error
$\|\sigma_n(u)-\lambda_h\|_{L^2(\Gamma_C)}$ at $s=h^{2k+1}/4$. The
dashed reference line is the full-regularity order $h^{k-1/2}$ from
Remark~\ref{rem:contact-pressure}; the solid line is the observed order $h^k$
for the smooth problem. Neither line is asserted as an asymptotic rate for
singular Example B.}
\label{fig:pressure-conv}
\end{figure}
\FloatBarrier

In the full-balance exact-solution runs, the largest value of $u_h-g$ over the
contact quadrature points vanished except for Example B with $P_2$, where it
was $4.9\cdot 10^{-5}$ and $2.2\cdot 10^{-5}$ on the two finest meshes. In
the matching $L^2(\Gamma_C)$ norm, the corresponding penetrations were
$4.9\cdot 10^{-6}$ and $1.9\cdot 10^{-6}$, whereas
\begin{align}
\bigl\|\gamma^{-1}\bigl(\sigma_n(u_h)-\lambda_h\bigr)\bigr\|_{\Gamma_C}
=1.0\cdot 10^{-5}
\quad\text{and}\quad
2.7\cdot 10^{-6}
\label{eq:numerical-feasibility-values}
\end{align}
respectively. Thus the observed ratios of the two sides of
\eqref{eq:discrete-feasibility-l2-bound} were $0.48$ and $0.70$, providing a
direct like-norm verification of the discrete feasibility estimate.

\paragraph{The Artificial Gap.}
The central-path relation \eqref{eq:central-path-flux} predicts that too large a value of \(s\)
produces separation rather than penetration, with artificial gap
\(\mu/|\sigma_n(u_\mu)|\). For Example A on the mesh \(n=32\),
Figure~\ref{fig:gap} shows the measure of the computed active set
\(\{P(u_h)>0\}\) and the largest gap \(g-u_h\) over the exact contact set as
\(s\) is swept over
twenty orders of magnitude; the exact contact set has measure one. At
$s = h/4$ the body has lifted off completely, the active set being empty and
the gap $1.3$ for $P_1$ and $1.9$ for $P_2$. The gap then decreases
monotonically, by about five orders of magnitude between $s = h/4$ and
$s = h^7/4$, until it reaches the level of the discretization error,
$8\cdot 10^{-7}$ and $1\cdot 10^{-6}$ respectively. Below that level,
reducing \(s\) changes neither the gap nor the active set, which saturates at
measure \(1.031\) for \(P_1\), one element beyond the exact contact set. The
decrease is slower than linear in \(\mu\) because the maximum is attained near
the free boundary, where \(\sigma_n(u)\) degenerates like \(|x|^3\) and the
predicted gap \(\mu/|\sigma_n(u_\mu)|\) is correspondingly large.

At the sufficient balance $s = h^{2k+1}/4$ the active set measures only
$0.047$ for $P_1$ and $0.62$ for $P_2$, and the artificial gap is
$3.4\cdot 10^{-2}$ and $1.1\cdot 10^{-3}$. Thus this choice preserves the
energy-norm rate, as established in Section~\ref{sec:error-estimates}, while
leaving the contact set and gap poorly resolved. Three or four further powers
of \(h\) are needed before the active set is captured to within one element.
This is
the same phenomenon already seen in the $L^2$ norm in Figure \ref{fig:svar},
and it should be kept in mind when the contact set, rather than the
displacement, is the quantity of interest.

\begin{figure}[!htbp]
\centering
\includegraphics[width=0.96\textwidth]{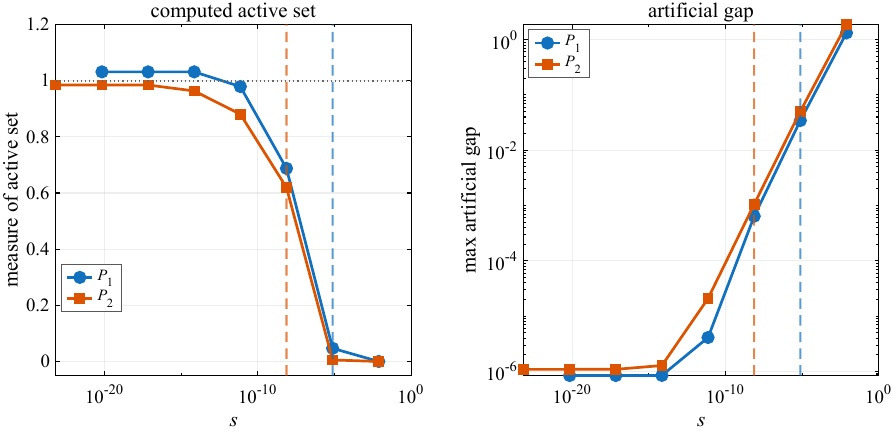}
\caption{Effect of $s$ on the contact set for Example A with $n=32$. Left:
measure of the computed active set $\{P(u_h)>0\}$; the exact measure is one.
Right: maximum artificial gap over the exact contact set. Dashed vertical
lines mark $s=h^{2k+1}/4$ for $P_1$ and $P_2$.}
\label{fig:gap}
\end{figure}
\FloatBarrier

\paragraph{The Nitsche Parameter.}
The theorem requires $\gamma_0$ to be sufficiently large, $\gamma_0 > C_I$
with $C_I$ the constant of the inverse inequality \eqref{eq:inverse}, and the
computations above use $\gamma_0 = 10$ for $P_1$ and $\gamma_0 = 20$ for
$P_2$. Figure \ref{fig:gamma} shows the energy norm error against $\gamma_0$
for Example A on three meshes, with $s$ held at $h^{2k+1}/4$. The observed
stability transition is abrupt and essentially mesh-independent, consistent
with the mesh-independent constant $C_I$: it lies between $\gamma_0 = 1$ and
$\gamma_0 = 1.5$ for $P_1$ and near $\gamma_0 = 5$ for $P_2$. Below it the
iteration either fails to meet the residual criterion or converges to a
discrete solution whose error grows under refinement. For $P_2$, behavior in
the range $1.5 \leq \gamma_0 \leq 5$ is irregular: the coarsest mesh sometimes
succeeds where finer meshes fail.

Above the stability threshold the error grows monotonically with $\gamma_0$,
by an order of magnitude between $\gamma_0 = 1.5$ and $\gamma_0 = 500$, and
asymptotically like $\gamma_0^{1/2}$. This is not a loss of consistency but the
regularization making itself felt: at fixed $s$ the barrier parameter is
$\mu = \gamma s = \gamma_0 s/h$, so raising $\gamma_0$ raises $\mu$ in
proportion, and the central path estimate contributes $\mu^{1/2}$. This is
consistent with the factor $\gamma_0^{-1}$ in the dimensional scaling of
Section~\ref{sec:error-estimates}: if $\gamma_0$ is increased, $s$ must be
decreased in proportion. The values used above
lie safely inside the coercive range, at the price of an error thirty to
eighty per cent above the smallest admissible $\gamma_0$.

\begin{figure}[!htbp]
\centering
\includegraphics[width=0.96\textwidth]{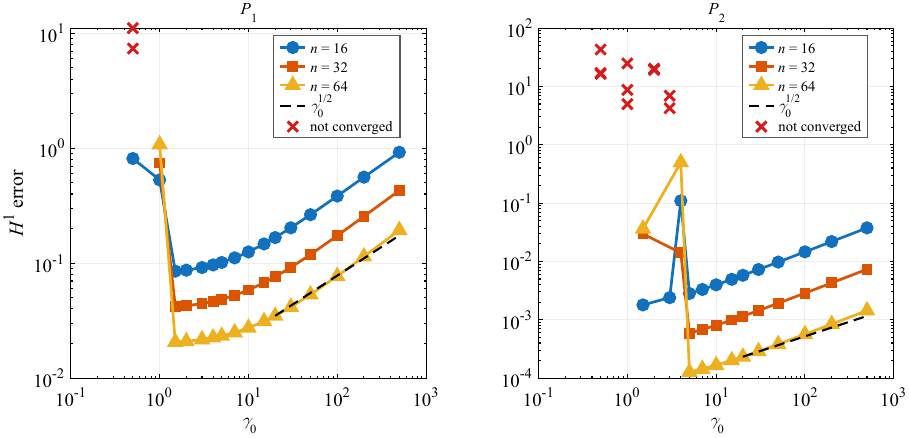}
\caption{Energy error against the Nitsche parameter $\gamma_0$ for Example
A, with $s=h^{2k+1}/4$ fixed. Crosses mark runs that did not satisfy the
residual criterion within $200$ iterations. The dashed reference line has
slope $\gamma_0^{1/2}$, consistent with $\mu=\gamma_0s/h$ and the central-path
estimate.}
\label{fig:gamma}
\end{figure}
\FloatBarrier


\subsection{Mesh Robustness and Free-Boundary Detection}
\label{subsec:mesh-robustness-free-boundary}

\paragraph{Unstructured Meshes.}
All computations so far are on uniform meshes. Since the coercivity rests on
the inverse inequality \eqref{eq:inverse}, whose constant depends on the shape
regularity of the elements, we repeated the convergence study on unstructured
meshes, obtained as Delaunay triangulations of randomly jittered points with
the contact and Dirichlet boundaries resolved at spacing $1/n$; one of them is
shown in the left panel of Figure \ref{fig:unstruct}, and the largest element
diameter is about twice the nominal $1/n$. The observed rates closely match
those on uniform meshes:
$1.08$--$1.09$ for $P_1$ and $2.31$--$2.34$ for $P_2$ in Example A, and for
Example B $1.13$--$1.25$ for $P_1$ and a decrease from $2.34$ to $1.67$ for
$P_2$, showing the same approach to $3/2$ as on the uniform meshes.

The Newton counts remain between $7$ and $12$, and the contact-pressure errors
are similar to those on the uniform meshes. The constraint violation again
vanishes except in the finest $P_2$ computation for Example B. With
$\gamma_0 = 10$ and $20$ the method is thus not close to its coercivity
threshold on meshes of this quality.

\begin{figure}[!htbp]
\centering
\includegraphics[width=0.98\textwidth]{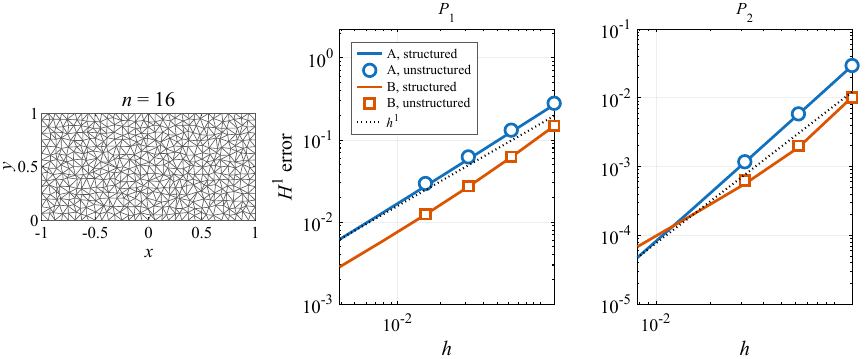}
\caption{Unstructured-mesh study. Left: mesh for $n=16$. Middle and right:
energy errors for Examples A and B, with uniform-mesh results shown as lines
and unstructured-mesh results as open markers. The horizontal coordinate is
the nominal $h=1/n$.}
\label{fig:unstruct}
\end{figure}
\FloatBarrier

\paragraph{Free-Boundary Detection.}
The two examples behave rather differently with respect to the
detection of the free boundary, see Figures \ref{fig:solution-A} and
\ref{fig:solution-B}. For Example B the set of contact edges classified as
active by the sign of $P(u_h)$ coincides with the exact contact set $[-1,0]$
already for $n = 16$. For Example A, on the other hand, the contact pressure
degenerates like $|x|^3$ at the free boundary and therefore falls below the
level of the discretization error well before $x = 0$ is reached; the
computed active set ends at $x = -0.75$, $-0.56$, $-0.38$, $-0.27$ and
$-0.20$ for $n = 8, \dots, 128$. The observed scale is consistent with the
following heuristic. The boundary-norm estimates suggest the scale
$h^{k+1/2}$ for the error in $P(u_h)$. Treating this norm scale pointwise and
balancing it against the exact value $P(u)=|x|^3/\gamma$ gives
$|x| \sim h^{(k-1/2)/3}$, and hence $|x| \sim h^{1/2}$ for $k = 2$, consistent
with the reported values. This balance is not a proved pointwise estimate or
a contact-set convergence result. The energy-norm error is not affected,
precisely because the pressure is negligible in this region, but the
comparison illustrates that a smooth manufactured solution does not
necessarily make the free boundary easier to locate.

\begin{figure}[H]
\centering
\includegraphics[width=0.74\textwidth]{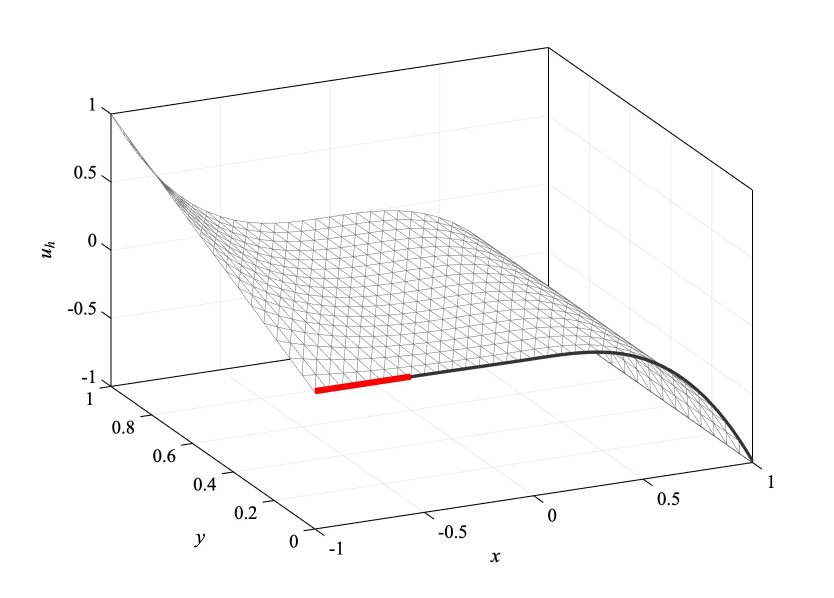}
\caption{Elevation of the $P_2$ solution for Example A ($n=16$,
$s=h^5/4$). Red marks contact edges classified as active and dark gray the
inactive edges. Although the exact contact set is $[-1,0]$, the cubically
degenerate pressure makes the computed active set end near $x=-0.56$ on this
mesh.}
\label{fig:solution-A}
\end{figure}
\FloatBarrier

\paragraph{Reproducibility.}
All parameter sweeps, error evaluations, and figures can be regenerated from
the accompanying MATLAB scripts. The unstructured meshes use a fixed random
seed, and the scripts record the mesh levels, regularization exponents,
iteration counts, and diagnostic norms in the archived data files.
The code and reference datasets are available at
\url{https://github.com/mglarson1/barrier-regularized-signorini}.

\clearpage


\section{Concluding Remarks}
\label{sec:concluding-remarks}

We have introduced a differentiable symmetric Nitsche method for the scalar
Signorini problem and identified its smoothing with a logarithmic barrier on
the augmented-Lagrangian slack variable. Eliminating that variable gives the
perturbed complementarity relation with \(\mu=\gamma s\). This interpretation
is more than algebraic: it identifies a continuous central-path problem with
which the finite element method is exactly consistent.

The resulting analysis separates discretization and regularization. Under the
barrier-feasibility condition of
Section~\ref{subsec:central-path-framework}, the continuous logarithmic energy
has a unique minimizer for every \(\mu>0\), with gap positive almost
everywhere, and Lemma~\ref{lem:central-path-estimate} controls its distance to
the Signorini solution without reciprocal-gap or flux-regularity assumptions.
The Sobolev regularity used for finite element approximation automatically
yields its \(L^2(\Gamma_C)\) Euler equation and normal flux, after which the
discrete solution is quasi-optimal relative to \(u_\mu\), uniformly in \(s\).
The monotonicity of the inverse-gap law also yields an \(H^2\)-bound uniform in
\(\mu\) locally in the interior of every planar contact face, on neighborhoods
that may contain a free-boundary point of the limiting Signorini solution.
Thus the logarithmic boundary layer itself does not cause loss of local
\(H^2\) regularity; the unresolved global obstruction lies at
boundary-condition junctions, corners, and polyhedral edges.
Uniform \(H^r(\Omega)\) regularity gives the sufficient
rate-preserving balance \(s\lesssim h^{2r-1}\); the sufficient choice
\(s\lesssim h^{2k+1}\) is the full-regularity case \(r=k+1\). For each fixed
mesh and \(s>0\), the discrete problem is uniquely solvable, its Newton
matrices are symmetric positive definite for a sufficiently large Nitsche
parameter, and the local Newton rate is quadratic. The same analysis gives
rates for discrete penetration and the complementarity residual.

The computations support the energy-norm estimates and display Newton
behavior consistent with the local theory on both structured and unstructured
meshes. They also reveal an important distinction between displacement error
and contact diagnostics. A smoothing parameter that preserves the energy rate
may still produce a visible artificial gap, degrade the \(L^2(\Omega)\) error,
or identify the contact set poorly. The choice of \(s\) should therefore be
guided by the quantity of interest. Moreover, because
\(\mu=\gamma_0s/h\), increasing \(\gamma_0\) at fixed \(s\) increases the
barrier error and should be accompanied by a proportional reduction of
\(s\).

Existence, uniqueness, and almost-everywhere strict feasibility of the
variational central path and uniform local \(H^2\) regularity on the interiors
of planar contact faces are proved here. Global uniform Sobolev bounds remain
conditional because they can be affected by the Dirichlet--contact interface
and by corner and edge singularities; parameter-uniform regularity above
\(H^2\) near the free boundary of the limiting Signorini solution is also
open. The analysis does not provide
\(L^2(\Omega)\) estimates,
contact-set convergence, or an estimate between the central-path and exact
contact pressures. These are the principal analytical directions suggested by
the numerical results.

\appendix

\section{Uniform Local Regularity Details}
\label{sec:local-regularity-details}

This appendix supplies the uniform estimates and limiting argument used in
Proposition~\ref{prop:uniform-local-central-path-regularity}. Throughout,
\(v_\star\in\mathcal D_B\) is fixed, \(0<\mu\leq\mu_0\), and
\(u_{\mu,\varepsilon}\) denotes the minimizer of
\eqref{eq:regularized-barrier-functional}.

\begin{lem}[Uniform bound for the Moreau problems]
\label{lem:moreau-uniform-h1-bound}
There is \(\varepsilon_0>0\), depending on \(\mu_0\) and the trace and
Poincar\'e constants, such that
\begin{equation}
\sup_{0<\mu\leq\mu_0}\sup_{0<\varepsilon\leq\varepsilon_0}
\|u_{\mu,\varepsilon}\|_{H^1(\Omega)}\leq C_{H^1}
\label{eq:regularized-central-path-uniform-h1-bound}
\end{equation}
The constant depends only on \(\mu_0\), the domain constants,
\(\|f\|_\Omega\), \(\|g\|_{\Gamma_C}\), \(E(v_\star)\), and
\(\mathcal B(v_\star)\), and is independent of \(\mu\) and
\(\varepsilon\).
\end{lem}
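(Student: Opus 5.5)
The plan is to compare the minimizer with the fixed competitor \(v_\star\in\mathcal D_B\) and then bound the regularized barrier from below by a quadratic that can be absorbed, uniformly in \(\mu\) and \(\varepsilon\).

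\emph{Step 1: energy comparison.} The Moreau envelope never exceeds the function it regularizes, so \(\psi_\varepsilon\leq\psi\). Minimality of \(u_{\mu,\varepsilon}\) therefore gives
\begin{equation*}
J_{\mu,\varepsilon}(u_{\mu,\varepsilon})\leq J_{\mu,\varepsilon}(v_\star)\leq E(v_\star)+\mu\,\mathcal B(v_\star).
\end{equation*}
Since \(\mu\leq\mu_0\), the right-hand side is at most \(E(v_\star)+\mu_0\max(\mathcal B(v_\star),0)\). This bound is independent of \(\mu\) and \(\varepsilon\).

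\emph{Step 2: lower bound on the envelope.} I need a lower bound for \(\psi_\varepsilon\) that is uniform in \(\varepsilon\in(0,\varepsilon_0]\). From \eqref{eq:logarithm-quadratic-lower-bound} we have \(\psi(\xi)\geq-\delta\xi^2-C_\delta\) for all \(\xi\), because \(\psi=+\infty\) for \(\xi\geq0\). Inserting this into the infimum gives
\begin{equation*}
\psi_\varepsilon(t)\geq\inf_\xi\Big(-\delta\xi^2+\frac{|t-\xi|^2}{2\varepsilon}\Big)-C_\delta .
\end{equation*}
For \(2\varepsilon\delta<1\) this infimum is an explicit quadratic, \(-\frac{\delta}{1-2\varepsilon\delta}t^2\). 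Taking \(\varepsilon\leq\varepsilon_0:=1/(4\delta)\) then yields
\begin{equation*}
\psi_\varepsilon(t)\geq-2\delta t^2-C_\delta .
\end{equation*}
This is the step that fixes \(\varepsilon_0\), and I expect it to be the main obstacle. The point is that \(\delta\) must be chosen first, in terms of \(\mu_0\) and the trace and Poincar\'e constants, and only afterwards is \(\varepsilon_0\) fixed. Monotonicity of the bound in \(\mu\) is also needed: because the lower bound is negative, multiplying by \(\mu\leq\mu_0\) gives \(\mu\psi_\varepsilon\geq-\mu_0(2\delta t^2+C_\delta)\).

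\emph{Step 3: coercivity.} Set \(t=u_{\mu,\varepsilon}-g\). Then
\begin{equation*}
\|t\|_{\Gamma_C}^2\leq 2C_{tr}^2\|u_{\mu,\varepsilon}\|_{H^1}^2+2\|g\|_{\Gamma_C}^2 .
\end{equation*}
For the bulk energy, Poincar\'e and Young give
\begin{equation*}
E(v)\geq\frac{c_P}{4}\|v\|_{H^1}^2-c_P^{-1}\|f\|_\Omega^2 .
\end{equation*}
Now choose \(\delta\) so that \(4\mu_0\delta C_{tr}^2\leq c_P/8\). Combining the three bounds gives
\begin{equation*}
\frac{c_P}{8}\|u_{\mu,\varepsilon}\|_{H^1}^2\leq E(v_\star)+\mu_0\max(\mathcal B(v_\star),0)+c_P^{-1}\|f\|_\Omega^2+4\mu_0\delta\|g\|_{\Gamma_C}^2+\mu_0C_\delta|\Gamma_C| .
\end{equation*}
This is exactly \eqref{eq:regularized-central-path-uniform-h1-bound}, with the claimed dependence of the constant.

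Existence of the minimizers \(u_{\mu,\varepsilon}\) themselves follows as in Theorem~\ref{thm:continuous-barrier-well-posedness}. The same coercivity estimate applies, and \(\psi_\varepsilon\) is continuous, convex and bounded below by a quadratic, so the functional is weakly lower semicontinuous via compactness of the trace.
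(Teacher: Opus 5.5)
Your proposal is correct and follows the paper's proof essentially step for step. The ingredients coincide: the same Moreau-envelope lower bound $\psi_\varepsilon(t)\geq-\frac{\delta}{1-2\delta\varepsilon}t^2-C_\delta$ with $\varepsilon_0=(4\delta)^{-1}$, the same comparison with $v_\star$ via $\psi_\varepsilon\leq\psi$, and the same absorption after fixing $\delta$ in terms of $\mu_0$, $c_P$ and $C_{\mathrm{tr}}$. The differences are cosmetic: you apply Young's inequality to the load term before choosing $\delta$, and the step $\mu\psi_\varepsilon\geq-\mu_0(2\delta t^2+C_\delta)$ tacitly uses $C_\delta\geq0$, which can always be arranged.
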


\begin{proof}
Let \(C_{\mathrm{tr}}\) be the norm of the trace from \(H^1(\Omega)\) to
\(L^2(\Gamma_C)\). For every \(\delta>0\), the logarithmic lower bound
\eqref{eq:logarithm-quadratic-lower-bound} and minimization of the resulting
quadratic in \(\xi\) give, whenever \(2\delta\varepsilon<1\),
\begin{equation}
\psi_\varepsilon(t)
\geq-\frac{\delta}{1-2\delta\varepsilon}t^2-C_\delta
\quad \forall t\in\mathbb R
\label{eq:moreau-logarithmic-exact-lower-bound}
\end{equation}
Thus, after requiring \(\varepsilon\leq(4\delta)^{-1}\),
\begin{equation}
\psi_\varepsilon(t)\geq-2\delta t^2-C_\delta
\quad \forall t\in\mathbb R
\label{eq:moreau-logarithmic-lower-bound}
\end{equation}
Since \(\psi_\varepsilon\leq\psi\), minimality and comparison with the fixed
barrier-feasible function give
\begin{equation}
J_{\mu,\varepsilon}(u_{\mu,\varepsilon})
\leq J_{\mu,\varepsilon}(v_\star)
\leq E(v_\star)+\mu_0\max\{\mathcal B(v_\star),0\}
\label{eq:moreau-comparison-upper-bound}
\end{equation}
On the other hand, \eqref{eq:moreau-logarithmic-lower-bound}, the trace
inequality, and \(a(v,v)\geq c_P\|v\|_{H^1(\Omega)}^2\) imply
\begin{align}
J_{\mu,\varepsilon}(v)
&\geq
\left(\frac{c_P}{2}-4\mu_0\delta C_{\mathrm{tr}}^2\right)
\|v\|_{H^1(\Omega)}^2
-\|f\|_\Omega\|v\|_{H^1(\Omega)}
\label{eq:moreau-uniform-coercivity-first}\\
&\quad
-4\mu_0\delta\|g\|_{\Gamma_C}^2
-\mu_0 C_\delta\lvert\Gamma_C\rvert
\label{eq:moreau-uniform-coercivity-data}
\end{align}
Choose \(\delta\leq c_P/(16\mu_0C_{\mathrm{tr}}^2)\) and then
\(\varepsilon_0\leq(4\delta)^{-1}\). Young's inequality yields
\begin{equation}
J_{\mu,\varepsilon}(v)
\geq\frac{c_P}{8}\|v\|_{H^1(\Omega)}^2-C_{\mathrm{data}}
\label{eq:moreau-uniform-coercivity-final}
\end{equation}
uniformly in the stated parameter ranges. Combining
\eqref{eq:moreau-comparison-upper-bound} and
\eqref{eq:moreau-uniform-coercivity-final} proves
\eqref{eq:regularized-central-path-uniform-h1-bound}.
\end{proof}

\begin{lem}[Local estimate for a monotone boundary law]
\label{lem:local-monotone-boundary-h2}
Under the geometric and data assumptions of
Proposition~\ref{prop:uniform-local-central-path-regularity},
\begin{equation}
\|u_{\mu,\varepsilon}\|_{H^2(U_0\cap\Omega)}
\leq C_{\mathrm{loc}}
\label{eq:regularized-central-path-local-h2-bound}
\end{equation}
for \(0<\mu\leq\mu_0\) and
\(0<\varepsilon\leq\varepsilon_0\). The constant is independent of both
parameters.
\end{lem}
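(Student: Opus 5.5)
We use the Nirenberg tangential difference-quotient method on the flat patch, with the monotone boundary law supplying a sign for the boundary term, followed by recovery of the normal second derivative from the equation.

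\emph{Setup.} After a rigid motion, let \(F\subset\{x_d=0\}\) with \(\Omega\) locally in \(\{x_d>0\}\). Choose a cutoff \(\eta\in C_c^\infty(U_1)\) with \(\eta=1\) on \(U_0\). By hypothesis \(\overline U_1\cap\partial\Omega\Subset F\), so \(\eta\) vanishes near \(\Gamma_D\) and near the edges of the face. Write \(u=u_{\mu,\varepsilon}\), \(z=u-G\) and \(\beta=\mu\psi_\varepsilon'\). The function \(\beta\) is Lipschitz and nondecreasing, but its Lipschitz constant \(\mu/\varepsilon\) is never used. For a tangential direction \(e_i\), \(i<d\), and \(|h|\) small, define the difference quotient \(\delta_h w=(w(\cdot+he_i)-w)/h\).

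\emph{Testing the Euler equation.} Test \eqref{eq:regularized-central-path-euler-equation} with
\[
w=-\delta_{-h}(\eta^2\delta_h z).
\]
This is admissible in \(V\), since it is supported inside \(U_1\) and tangential shifts preserve the half-space. Discrete integration by parts gives
\[
(\nabla\delta_h u,\nabla(\eta^2\delta_h z))_\Omega+(\delta_h\beta(z),\eta^2\delta_h z)_{\Gamma_C}=-(f,\delta_{-h}(\eta^2\delta_h z))_\Omega .
\]
Note that \(\beta(u-g)=\beta(z)\) on \(\Gamma_C\), because \(G|_{\Gamma_C}=g\).

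\begin{itemize}
\item The boundary term is nonnegative by \eqref{eq:local-barrier-difference-monotonicity}: pointwise, \((\beta(a)-\beta(b))(a-b)\geq0\). We drop it.
\item In the volume term, write \(\delta_h u=\delta_h z+\delta_h G\). The main part is \(\|\eta\nabla\delta_h z\|^2\). The commutator \(2\eta\nabla\eta\,\delta_h z\) is bounded by Young's inequality using \(\|\delta_h z\|_{L^2(\operatorname{supp}\eta)}\lesssim\|\nabla z\|_{U_1}\). The \(G\)-part is bounded by \(\|\delta_h\nabla G\|\lesssim\|G\|_{H^2}\).
\item The right-hand side is bounded by \(\|f\|\,\|\nabla(\eta^2\delta_h z)\|\), using the standard bound \(\|\delta_{-h}\phi\|\le\|\partial_i\phi\|\).
\end{itemize}

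Absorbing terms gives
\[
\|\eta\nabla\delta_h z\|\lesssim \|f\|_\Omega+\|G\|_{H^2}+\|u\|_{H^1(\Omega)},
\]
and the last norm is bounded uniformly by Lemma~\ref{lem:moreau-uniform-h1-bound}. Letting \(h\to0\) gives \(\partial_i\nabla u\in L^2\) on \(U_0\) for every tangential \(i\), uniformly in \(\mu\) and \(\varepsilon\).

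\emph{Normal derivative.} Testing with \(w\in C_c^\infty(U_1\cap\Omega)\) shows \(-\Delta u=f\) in the interior. Hence
\[
\partial_{dd}u=-f-\sum_{i<d}\partial_{ii}u \quad\text{in } L^2(U_0\cap\Omega),
\]
which is bounded by the tangential estimates. This completes the \(H^2(U_0\cap\Omega)\) bound. If the cutoff geometry requires it, we use an intermediate patch \(U_0\Subset U'\Subset U_1\).

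\emph{Main obstacle.} The delicate step is the boundary term. We must not split it using the Lipschitz constant of \(\beta\), which blows up as \(\mu/\varepsilon\). Instead, the shifted argument must be exactly \(z=u-g\) on the boundary, so that monotonicity applies pointwise. This is precisely why we subtract the \(H^2\) extension \(G\) and not some other lifting.

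A secondary check is that the test function \(w\) is legitimate. It has \(H^1\) regularity, because \(u\) and \(G\) do, and it vanishes near \(\Gamma_D\) and near the face edges, because of the support of \(\eta\).
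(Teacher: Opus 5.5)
Your proposal is correct and follows essentially the same route as the paper. The shared steps are: the test function \(w=-\delta_{-h}(\eta^2\delta_h z)\) with \(z=u_{\mu,\varepsilon}-G\); dropping the boundary term by monotonicity of \(\beta_{\mu,\varepsilon}\) without using its Lipschitz constant; the uniform \(H^1\) bound of Lemma~\ref{lem:moreau-uniform-h1-bound}; and recovery of the normal second derivative from \(-\Delta u_{\mu,\varepsilon}=f\). The only difference is cosmetic: you use a single cutoff on \(U_1\), which is legitimate because \(\overline U_1\cap\partial\Omega\Subset F\) lies in one plane, whereas the paper covers \(\overline U_0\cap\partial\Omega\) by finitely many flat half-cylinders and adds a standard interior estimate.
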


\begin{proof}
The compact part \(\overline U_0\cap\partial\Omega\) is covered by finitely
many nested flat half-cylinders \(Q_0^+\Subset Q_1^+\Subset U_1\cap\Omega\).
After a rigid change of coordinates, the flat boundary is
\(\Sigma_1=Q_1\cap\{x_d=0\}\), while
\(Q_1^+=Q_1\cap\{x_d>0\}\). It suffices to prove the estimate on one such
pair. Choose \(\eta\in C_c^\infty(Q_1)\) equal to one on \(Q_0\), use the
local \(H^2\)-extension of the obstacle, and set
\begin{equation}
z\coloneqq u_{\mu,\varepsilon}-G,
\qquad
\beta_{\mu,\varepsilon}\coloneqq\mu\psi_\varepsilon'
\label{eq:local-regularized-gap-and-law}
\end{equation}
For test functions supported in \(Q_1^+\cup\Sigma_1\), the regularized Euler
equation becomes
\begin{equation}
(\nabla z,\nabla w)_{Q_1^+}
+(\beta_{\mu,\varepsilon}(z),w)_{\Sigma_1}
=(f,w)_{Q_1^+}-(\nabla G,\nabla w)_{Q_1^+}
\label{eq:localized-regularized-euler-equation}
\end{equation}

Let \(\boldsymbol\tau\) be any unit direction tangent to \(\Sigma_1\) and
define
\begin{equation}
\delta_hv(x)\coloneqq
\frac{v(x+h\boldsymbol\tau)-v(x)}{h}
\label{eq:tangential-difference-quotient}
\end{equation}
Tangential translations preserve the half-cylinder. For functions whose
support stays inside the translated patch, discrete integration by parts gives
\begin{equation}
(a,-\delta_{-h}b)_D=(\delta_ha,b)_D
\label{eq:discrete-integration-by-parts}
\end{equation}
both for \(D=Q_1^+\) and for \(D=\Sigma_1\). Set
\(W_h=\eta^2\delta_hz\) and take
\(w=-\delta_{-h}W_h\) in
\eqref{eq:localized-regularized-euler-equation}. This is an admissible
\(V\)-test function because the support of \(\eta\) is separated from the
edge of the contact face. Expanding \(\nabla W_h\) gives
\begin{align}
\|\eta\delta_h\nabla z\|_{Q_1^+}^2
&+2(\eta\delta_h\nabla z,\delta_hz\nabla\eta)_{Q_1^+}
+(\delta_h\beta_{\mu,\varepsilon}(z),
\eta^2\delta_hz)_{\Sigma_1}
\label{eq:localized-difference-quotient-identity-left}\\
&=(f,-\delta_{-h}W_h)_{Q_1^+}
-(\delta_h\nabla G,\nabla W_h)_{Q_1^+}
\label{eq:localized-difference-quotient-identity-right}
\end{align}
The boundary term is nonnegative pointwise because
\begin{equation}
\delta_h\beta_{\mu,\varepsilon}(z)\,\delta_hz
=\frac{
[\beta_{\mu,\varepsilon}(z(x+h\boldsymbol\tau))
-\beta_{\mu,\varepsilon}(z(x))]
[z(x+h\boldsymbol\tau)-z(x)]}{h^2}
\geq0
\label{eq:pointwise-boundary-monotonicity}
\end{equation}

The usual difference-quotient estimate, applied on a slightly enlarged
support inside \(Q_1\), yields
\begin{equation}
\|\delta_hz\|_{Q_1^+}\leq C\|\nabla z\|_{Q_1^+}
\label{eq:first-difference-quotient-bound}
\end{equation}
Moreover,
\begin{equation}
\|\delta_{-h}W_h\|_{Q_1^+}
+\|\nabla W_h\|_{Q_1^+}
\leq C\bigl(
\|\eta\delta_h\nabla z\|_{Q_1^+}
+\|z\|_{H^1(Q_1^+)}\bigr)
\label{eq:localized-test-function-bound}
\end{equation}
and \(\|\delta_h\nabla G\|_{Q_1^+}\leq C\|G\|_{H^2(Q_1^+)}\).
Consequently, Cauchy--Schwarz and Young's inequality in
\eqref{eq:localized-difference-quotient-identity-left}--
\eqref{eq:localized-difference-quotient-identity-right}, together with the
nonnegative boundary term, give
\begin{equation}
\|\eta\delta_h\nabla z\|_{Q_1^+}^2
\leq C\left(
\|z\|_{H^1(Q_1^+)}^2
+\|f\|_{Q_1^+}^2
+\|G\|_{H^2(Q_1^+)}^2\right)
\label{eq:local-tangential-difference-quotient-bound}
\end{equation}
The right-hand side contains no derivative of \(f\). By
Lemma~\ref{lem:moreau-uniform-h1-bound}, its constant is uniform in
\(\mu\) and \(\varepsilon\). Letting \(h\to0\) controls every second
derivative containing a tangential derivative.

Tests compactly supported in \(Q_1^+\) show that
\(-\Delta u_{\mu,\varepsilon}=f\). In the flat coordinates this gives
\begin{equation}
\partial_{dd}z
=-f-\Delta G-\sum_{j=1}^{d-1}\partial_{jj}z
\label{eq:local-central-path-normal-second-derivative}
\end{equation}
which controls the remaining second derivative. A finite covering of
\(\overline U_0\cap\partial\Omega\), together with the standard interior
estimate away from the boundary, proves
\eqref{eq:regularized-central-path-local-h2-bound}.
\end{proof}

\begin{lem}[Limit of the Moreau problems]
\label{lem:moreau-central-path-limit}
For every fixed \(\mu>0\),
\begin{equation}
u_{\mu,\varepsilon}\to u_\mu
\quad\text{strongly in }H^1(\Omega)
\quad\text{as }\varepsilon\downarrow0
\label{eq:moreau-minimizer-strong-convergence}
\end{equation}
The uniform local estimate
\eqref{eq:regularized-central-path-local-h2-bound} therefore passes to
\(u_\mu\), with its constant still uniform for \(0<\mu\leq\mu_0\).
\end{lem}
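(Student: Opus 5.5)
We prove \eqref{eq:moreau-minimizer-strong-convergence} through Mosco convergence of the boundary functionals. We then pass the local bound to the limit by weak compactness in \(H^2\).

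Fix \(\mu>0\) and set \(\mathcal B_\varepsilon(v)=\int_{\Gamma_C}\psi_\varepsilon(v-g)\,ds\). The Moreau envelope has two basic properties. First, \(\psi_\varepsilon\leq\psi\). Second, \(\psi_\varepsilon\uparrow\psi\) pointwise as \(\varepsilon\downarrow0\), since \(\psi\) is convex, proper and lower semicontinuous.

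The recovery sequence is the constant one. For every \(v\in V\), monotone convergence applies after adding the uniform quadratic minorant \eqref{eq:moreau-logarithmic-lower-bound}. It gives \(\mathcal B_\varepsilon(v)\to\mathcal B(v)\). The minorant is integrable because \(v-g\in L^2(\Gamma_C)\).

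For the liminf inequality, let \(v_\varepsilon\rightharpoonup v\) in \(V\). By compactness of the trace, after passing to a subsequence, \(v_\varepsilon-g\to v-g\) strongly in \(L^2(\Gamma_C)\) and almost everywhere. Fix \(\varepsilon'>0\). For \(\varepsilon\leq\varepsilon'\) we have \(\psi_\varepsilon\geq\psi_{\varepsilon'}\), and \(\psi_{\varepsilon'}\) is continuous. Fatou's lemma, applied to \(\psi_{\varepsilon'}+2\delta t^2+C_\delta\geq0\), then gives
\[
\liminf_{\varepsilon\to0}\mathcal B_\varepsilon(v_\varepsilon)\geq\int_{\Gamma_C}\psi_{\varepsilon'}(v-g)\,ds,
\]
where the quadratic terms converge because the traces converge strongly. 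Letting \(\varepsilon'\to0\) by monotone convergence yields \(\liminf_{\varepsilon\to0}\mathcal B_\varepsilon(v_\varepsilon)\geq\mathcal B(v)\).

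Next we identify the limit. Lemma~\ref{lem:moreau-uniform-h1-bound} bounds \(u_{\mu,\varepsilon}\) in \(H^1(\Omega)\) uniformly, so some subsequence converges weakly to a limit \(\bar u\). Combining the liminf inequality, weak lower semicontinuity of \(E\), and minimality of \(u_{\mu,\varepsilon}\) tested against \(u_\mu\), we get
\[
J_\mu(\bar u)\leq\liminf_{\varepsilon\to0} J_{\mu,\varepsilon}(u_{\mu,\varepsilon})\leq\limsup_{\varepsilon\to0} J_{\mu,\varepsilon}(u_\mu)=J_\mu(u_\mu).
\]
Uniqueness in Theorem~\ref{thm:continuous-barrier-well-posedness} gives \(\bar u=u_\mu\). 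The limit is independent of the subsequence, so the whole family converges weakly.

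We upgrade this to strong convergence. Equality of the extreme terms in the display gives \(J_{\mu,\varepsilon}(u_{\mu,\varepsilon})\to J_\mu(u_\mu)\). This convergence, together with \(\liminf\mu\mathcal B_\varepsilon(u_{\mu,\varepsilon})\geq\mu\mathcal B(u_\mu)\) and \(l(u_{\mu,\varepsilon})\to l(u_\mu)\), yields
\[
\limsup_{\varepsilon\to0}\tfrac12 a(u_{\mu,\varepsilon},u_{\mu,\varepsilon})\leq\tfrac12 a(u_\mu,u_\mu).
\]
Weak convergence plus convergence of the energy norm gives strong convergence in \(V\). This is the main step; the delicate point is that Fatou must be applied with the quadratic minorant so that the liminf inequality holds uniformly in \(\varepsilon\).

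Finally, by Lemma~\ref{lem:local-monotone-boundary-h2}, \(u_{\mu,\varepsilon}\) is bounded in \(H^2(U_0\cap\Omega)\) by \(C_{\mathrm{loc}}\), independently of \(\mu\) and \(\varepsilon\). Some subsequence therefore converges weakly in \(H^2(U_0\cap\Omega)\), and its limit must coincide with the \(H^1\) limit \(u_\mu\). Weak lower semicontinuity of the norm gives \(\|u_\mu\|_{H^2(U_0\cap\Omega)}\leq C_{\mathrm{loc}}\) for every \(0<\mu\leq\mu_0\).
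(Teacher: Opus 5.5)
Your proof is correct and follows the paper's argument: a Mosco liminf inequality with the constant recovery sequence, weak compactness from Lemma~\ref{lem:moreau-uniform-h1-bound}, identification of the limit by uniqueness of \(u_\mu\), and transfer of the local \(H^2\) bound by weak lower semicontinuity. The differences are only local. You get the pointwise liminf from \(\psi_\varepsilon\geq\psi_{\varepsilon'}\) and continuity of a fixed envelope instead of the Fenchel representation \eqref{eq:moreau-fenchel-representation}, and you upgrade to strong convergence by convergence of \(a(u_{\mu,\varepsilon},u_{\mu,\varepsilon})\) plus weak convergence instead of the strong-convexity bound \eqref{eq:moreau-strong-convexity-convergence}, which reaches the same conclusion in one line.
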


\begin{proof}
The Moreau envelopes increase pointwise to \(\psi\) as
\(\varepsilon\downarrow0\). For completeness, their Fenchel representation
is
\begin{equation}
\psi_\varepsilon(t)
=\sup_{p\in\mathbb R}
\left(pt-\psi^*(p)-\frac{\varepsilon p^2}{2}\right)
\label{eq:moreau-fenchel-representation}
\end{equation}
Hence, if \(t_j\to t\) and \(\varepsilon_j\downarrow0\), then
\begin{equation}
\liminf_{j\to\infty}\psi_{\varepsilon_j}(t_j)\geq\psi(t)
\label{eq:moreau-pointwise-liminf}
\end{equation}
Indeed, the liminf is bounded below by
\(pt-\psi^*(p)\) for every fixed \(p\), and taking the supremum proves the
claim.

Suppose that \(v_j\rightharpoonup v\) in \(H^1(\Omega)\). Compactness of the
trace gives, after passage to a subsequence,
\(v_j-g\to v-g\) strongly in \(L^2(\Gamma_C)\) and almost everywhere.
Adding the uniform quadratic lower-bound correction from
\eqref{eq:moreau-logarithmic-lower-bound} and applying Fatou's lemma to
\eqref{eq:moreau-pointwise-liminf} gives
\begin{equation}
\int_{\Gamma_C}\psi(v-g)\,ds
\leq\liminf_{j\to\infty}
\int_{\Gamma_C}\psi_{\varepsilon_j}(v_j-g)\,ds
\label{eq:moreau-integral-liminf}
\end{equation}
For a fixed \(v\in\mathcal D_B\), the same quadratic correction and monotone
convergence give
\begin{equation}
\int_{\Gamma_C}\psi_\varepsilon(v-g)\,ds
\to\mathcal B(v)
\quad\text{as }\varepsilon\downarrow0
\label{eq:moreau-integral-recovery}
\end{equation}
Equations~\eqref{eq:moreau-integral-liminf} and
\eqref{eq:moreau-integral-recovery} are the Mosco liminf and recovery
properties for \(J_{\mu,\varepsilon}\).

Lemma~\ref{lem:moreau-uniform-h1-bound} gives weak compactness of the
minimizers. Every weak cluster point minimizes \(J_\mu\), by the liminf
property, minimality of \(u_{\mu,\varepsilon}\), and recovery with
\(v=u_\mu\). The minimizer of \(J_\mu\) is unique, so the whole family
converges weakly to \(u_\mu\), and the minimum values converge. Strong
convexity then yields
\begin{equation}
\frac{c_P}{2}
\|u_{\mu,\varepsilon}-u_\mu\|_{H^1(\Omega)}^2
\leq J_{\mu,\varepsilon}(u_\mu)
-J_{\mu,\varepsilon}(u_{\mu,\varepsilon})
\to0
\label{eq:moreau-strong-convexity-convergence}
\end{equation}
which proves \eqref{eq:moreau-minimizer-strong-convergence}.

Finally, Lemma~\ref{lem:local-monotone-boundary-h2} gives weak compactness in
\(H^2(U_0\cap\Omega)\). Its weak limit agrees with \(u_\mu\) by the strong
\(H^1\) convergence, and weak lower semicontinuity transfers the uniform
local bound to \(u_\mu\).
\end{proof}

\paragraph{Acknowledgments.}
PH was supported by the Swedish Research Council under grant 2022-03908. MGL
was supported in part by the Swedish Research Council under grants 2021-04925
and 2025-05562, the Knut and Alice Wallenberg Foundation under grant
KAW 2025.0277, and the Swedish Research Programme Essence.

\paragraph{Use of Artificial Intelligence and Computational Tools.}
During the preparation and revision of this manuscript, the authors used
OpenAI's GPT-5.6 Sol through ChatGPT and Codex, as well as Claude Fable 5 and
Claude Opus 5, for language editing, literature searches, and manuscript
refinement. MATLAB was used for numerical computations and figure preparation.
The authors reviewed all resulting material and take full responsibility for
the content of the manuscript.

\enlargethispage{\baselineskip}
\paragraph{Authors' Addresses.}\mbox{}\par
\begingroup
\fontsize{9}{10.5}\selectfont
\noindent
Peter Hansbo, Department of Mechanical Engineering, J\"onk\"oping University,
Sweden,\\
\hspace*{1em}\texttt{peter.hansbo@ju.se}.\par
\noindent
Mats G. Larson, Department of Mathematics and Mathematical Statistics,
Ume{\aa} University, Sweden,\\
\hspace*{1em}\texttt{mats.larson@umu.se}.
\endgroup

\clearpage

\footnotesize{

}

\end{document}